\documentclass[11pt]{amsart}

\usepackage[T1]{fontenc}
\usepackage[utf8]{inputenc}
\usepackage{lmodern}
\usepackage{amsmath,amssymb,amsthm,mathtools,mathrsfs,amsrefs, bbm}
\usepackage[margin=1.25in]{geometry}
\usepackage[shortlabels]{enumitem}
\usepackage[colorlinks=true,linkcolor=blue,citecolor=blue,urlcolor=blue]{hyperref}
\usepackage{microtype}
\usepackage{comment}

\usepackage{tikz}
\usetikzlibrary{arrows.meta,calc,positioning}

\newtheorem{theorem}{Theorem}[section]
\newtheorem{lemma}[theorem]{Lemma}
\newtheorem{proposition}[theorem]{Proposition}
\newtheorem{corollary}[theorem]{Corollary}
\theoremstyle{definition}
\newtheorem{remark}[theorem]{Remark}
\newtheorem{definition}[theorem]{Definition}

\newcommand{\Cu}{\operatorname{Cu}}
\newcommand{\cS}{\mathcal{S}}
\newcommand{\cT}{\mathcal{T}}
\newcommand{\C}{\mathbb{C}}
\newcommand{\N}{\mathbb{N}}
\newcommand{\cU}{\mathcal{U}}

\renewcommand{\epsilon}{\varepsilon}

\title{Regularity and selfless cp maps}
\author{Miles Gould, Leonel Robert}

\begin{document}

\begin{abstract}
We investigate the regularity properties of C*-algebras admitting a weakly selfless unital completely positive map.
\end{abstract}
\maketitle

\section{Introduction}
A pair $(A,\rho)$ consisting of a unital C*-algebra $A$ and a
GNS-faithful state $\rho$ is called selfless if, 
for some unital C*-algebra
$C\neq\C$ with a GNS-faithful state $\kappa$ and some ultrapower $(A^{\cU},\rho^{\cU})$, there exists
a state-preserving *-homomorphism
$$
(A,\rho)*(C,\kappa)\longrightarrow (A^{\cU},\rho^{\cU})
$$
whose restriction to $A$ is the diagonal embedding.
If $\rho$ is not a trace, then $(A,\rho)$ is selfless if and only if $A$
is simple and purely infinite, while if $\rho$ is a trace, then $(A,\rho)$
is selfless if and only if $A$ is  infinite dimensional, simple, and monotracial with strict comparison relative to $\rho$
\cite{selfless,GO26,OZA,selfless2}.
In the tracial case, $A$ has stable rank one 
and its unique trace is also its unique 2-quasitrace \cite{selfless}.
Selflessness has found many applications in the study of regularity properties of C*-algebras, e.g., as a mechanism for verifying the strict comparison property for various classes of C*-algebras (see for example \cite{amrutametal, Bell, floresetal, hayesetal2,  OZA, vigdorovich}).

A broader notion of selflessness, relative to a 
C*-correspondence,
was introduced in \cite{GJKEPR}. Applied to the KSGNS C*-correspondence of  a completely positive map $\phi\colon A\to A$, it gives a notion of selflessness for the pair $(A,\phi)$.  Unlike  
its state counterpart, this setting accommodates non-simple C*-algebras and C*-algebras with more than one trace.
 It is natural to ask which regularity properties
persist in this setting. 
This question was partially explored in \cite{GJKEPR}.
We pursue it here more directly. More concretely,
we investigate which regularity properties of the underlying
C*-algebra $A$ follow from the existence of a weakly selfless
unital completely positive map $\phi\colon A\to A$.

Recall that a C*-algebra is called \emph{pure} if its Cuntz semigroup
is almost unperforated and almost divisible. 



\begin{theorem}\label{thm:main2}
Let $A$ be a unital C*-algebra and let $\phi\colon A\to A$
be a ucp map such that $(A,\phi)$ is weakly selfless.
Suppose that for every closed two-sided ideal $I\subseteq A$
and every approximate unit $(e_\lambda)_\lambda$ of $I$,
$(\phi(e_\lambda))_\lambda$ is also an approximate unit of $I$.
Then $A$ is pure and every lower semicontinuous
$[0,\infty]$-valued $2$-quasitrace on $A$ is a trace.
\end{theorem}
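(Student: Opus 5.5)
We first unpack weak selflessness for $(A,\phi)$ in the sense of \cite{GJKEPR}. For the KSGNS correspondence $E_\phi$, it yields an ultrapower $A^{\cU}$ and elements of $A^{\cU}\cap A'$, or more precisely a correspondence-level embedding, that behave like a free semicircular or Haar unitary family relative to $\phi$. Concretely, we expect to extract elements $s\in A^{\cU}$ such that, for $a,b\in A$, $b^* s^* a s b$ is approximately $b^*\phi(a)b$. Iterating with free copies $s_1,\dots,s_n$ gives freeness with amalgamation in the $\phi$-sense. The plan is to transport the standard arguments for selfless states, as in \cite{selfless}, to this amalgamated setting. Positive elements $a$ get replaced by $\phi(a)$ in the comparison estimates.

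\emph{Step 1 (almost divisibility).} Given $a\in (A\otimes\mathcal K)_+$ and $n$, use $n$ free copies $s_1,\dots,s_n$ to construct elements $x_1,\dots,x_n$. The $x_i$ should be approximately mutually orthogonal in the ultrapower, each satisfy $x_i^*x_i\approx \phi(a)$-type expressions, and have $\sum x_ix_i^*$ dominated by $a$. Free semicircular functional calculus supplies these, using spectral projections of $s_i$ in free position. This gives $n[\phi(a)]\le[a]$ in $\Cu(A^{\cU})$, approximately. Then we descend to $A$ via the standard $\epsilon$-cutdown reindexing argument for Cuntz comparison in ultrapowers.

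\emph{Step 2 (the hypothesis on ideals).} We need $[\phi(a)]$ to be comparable to $[a]$ up to ideal information. The approximate-unit hypothesis says exactly that $\phi$ preserves the ideal generated by any $a$: the closed ideal generated by $\phi(a)$ equals that of $a$. Combined with the faithfulness built into the correspondence, it should give $[a]\le \infty[\phi(a)]$ and $[\phi(a)]\le\infty[a]$. With Step 1, this yields almost divisibility. I expect \textbf{this step to be the main obstacle}: without the hypothesis, $\phi$ could collapse part of the primitive ideal space, and divisibility would only hold inside a smaller ideal.

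\emph{Step 3 (almost unperforation and quasitraces).} For comparison, suppose $(k+1)[a]\le k[b]$. The goal is $[a]\le[b]$. Mimic the selfless-state proof: free-position conjugation lets $a$ be moved by $s$ so that the support of $s a s^*$ becomes "$\phi$-generic". A functional $f\in F(\Cu(A))$ evaluated on such an element depends only on $f\circ\phi$ data, and Step 1's divisibility then lets us absorb the $1/k$ error. This is done via the Ortega--Perera--Rørdam-type characterisation: $\Cu(A)$ is almost unperforated if $x\le y$ whenever $x\le \infty y$ and $\hat x<\hat y$ on functionals. Finally, for quasitraces, use that free products with amalgamation carry $2$-quasitraces whose restrictions are additive. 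Given a lower semicontinuous $2$-quasitrace $\tau$ on $A$, extend $\tau^{\cU}$ to the relevant subalgebra generated by $A$ and the $s_i$. The semicircular elements linearise: $\tau(a+b)$ becomes the value of $\tau$ on a single element $\frac12(s_1 a s_1^*+s_2 b s_2^*)$-type combination with orthogonal-like summands, forcing $\tau(a+b)=\tau(a)+\tau(b)$. The ideal hypothesis again ensures that infinite values behave consistently across ideals. Hence $\tau$ is a trace.
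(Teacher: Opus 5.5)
Your proposal has genuine gaps, beginning with the mechanism you extract from weak selflessness.

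\textbf{What fails.}
\begin{itemize}
\item \emph{The basic relation.} Weak selflessness only gives a unital $*$-homomorphism $C=(*_A)_{i\ge1}\cS_\phi\to A^{\cU}$ fixing $A$. It carries no compatibility with $\phi^{\cU}$, and the $s_i$ do not commute with $A$. The relation you want holds for the creation operator, $\ell_\phi^*a\ell_\phi=\phi(a)$ in $\cT_\phi$, and $\cT_\phi$ is not mapped into $A^{\cU}$. For the semicircular element you only have $E(sas)=\phi(a)$, which needs the expectation. In fact no $s\in A^{\cU}$ with $s^*as\approx\phi(a)$ exists in general. Take $A=M_{2^\infty}$ and $\phi=\tau(\cdot)1$; this satisfies every hypothesis of the theorem. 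For a projection $p$ with $\tau(p)=\frac12$ you would get $1\precsim s^*ps\precsim p$ in $A^{\cU}$, which is impossible since $d_{\tau^{\cU}}(p)=\frac12$.
\item \emph{Step 1.} The same example refutes it. Here $[\phi(a)]=[1]$ for every nonzero $a\ge0$, so $n[\phi(a)]\le[a]$ would force $2[1]\le[1]$.
\item \emph{Step 2.} This misreads the hypothesis. It yields $\phi(I)\subseteq I=\operatorname{Ideal}(\phi(I))$ for each ideal $I$, not $\operatorname{Ideal}(\phi(a))=\operatorname{Ideal}(a)$ for each element $a$. For instance, $x\oplus y\mapsto x_{11}1\oplus y_{11}1$ on $M_2\oplus M_2$ preserves approximate units of ideals but kills $e_{22}\oplus0$.
\item \emph{Step 3.} This contains no actual argument. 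Nothing explains why functionals on $\Cu(A)$ should ``depend only on $\phi$-data.'' There is no general additivity theorem for $2$-quasitraces on amalgamated free products; for unbounded l.s.c.\ $\tau$ there is not even an evident $\tau^{\cU}$. Finally, $2$-quasitraces are additive only on commuting elements, and $s_1as_1$ and $s_2bs_2$ do not commute.
\end{itemize}

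\textbf{What is missing.} You need a device that turns $\phi$-inequalities into Cuntz comparison. The paper's device is strict $\phi$-comparison: if $d_\phi(a)\le\delta\phi(b)$ with $\delta<1$ and $a\in\operatorname{Ideal}(b)$, then $a\precsim b$. The proof runs as follows.
\begin{itemize}
\item Replace $b$ by $b'=\gamma^{-1}z^*bz\sim b$ in $C$, where $z=1+t\sum_ix_is_i$ and $\sum_ix_i^*bx_i=e_\lambda$. Then $q=\widetilde\phi(b')\ge\gamma^{-1}\phi(b)$, and $\phi\bigl(q(q+\eta)^{-1}\bigr)$ acts almost as a unit on the support of $a$. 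This is where the hypothesis that $\phi^3(e_\lambda)$ is an approximate unit of $\operatorname{Ideal}(b)$ is used.
\item The map $\theta=\widetilde\phi(c\,\widetilde\phi(\cdot)\,c)$, with $c=(q+\eta)^{-1/2}$, is selfless on $C$.
\item In $\cT_\theta$ write $b'^{1/2}s_\theta h=u+v$ with $u=b'^{1/2}\ell_\theta h$ and $v=b'^{1/2}\ell_\theta^*h$. Then $u^*u=h\theta(b')h\ge\alpha h^2$ and $v^*v\le\gamma\delta h^2$.
\item Haagerup's inequality $(u+v)^*(u+v)\ge(1-t)u^*u-(t^{-1}-1)v^*v$ gives $h^2\precsim b'$ in $\cS_\theta$. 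This is pulled back to $A$ using selflessness of $\theta$ and then weak selflessness of $\phi$.
\end{itemize}

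From strict $\phi$-comparison the conclusions follow:
\begin{itemize}
\item Almost unperforation follows from a rank-compression lemma.
\item Quasitraces are traces because $\phi^{(m)}(x)\le\phi^{(m)}(y)$ forces $\tau_m(x)\le\tau_m(y)$. Apply this to $\operatorname{diag}(a+b,0)$ and $\operatorname{diag}(a,b)$. Unbounded $\tau$ is handled with $\tau(a+b)^{1/2}\le\tau(a)^{1/2}+\tau(b)^{1/2}$ and an approximate unit.
\item Almost divisibility is not proved directly. One first proves global Glimm halving using square-zero elements $y=h_+^2s_{n+2}h_-\in\overline{bCb}$. Their ideals are controlled through $E(yy^*)$ and the fact that $E$ preserves ideals of $C$, which again uses the approximate-unit hypothesis.
\item Thiel's theorem then upgrades strict comparison by traces plus Glimm halving to almost divisibility.
\end{itemize}
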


When $A$ is simple, the condition on approximate units is automatic
for a ucp map. Thus, Theorem~\ref{thm:main2} implies that if $A$
is simple and admits a weakly selfless ucp map, then it is pure
and its bounded $2$-quasitraces are traces.
If $A$ moreover admits a tracial state, then Lin's result
\cite[Corollary~4.5]{Lin} implies that $A$ has stable rank one.
Consequently, in the simple case the
purely infinite/stable-rank-one dichotomy is again present in the
setting of the preceding theorem, while
allowing more than one tracial state in the finite case. 



Preservation of pureness under tensor products 
was recently investigated by Seth and Vilalta 
in \cite{seth-vilalta}. 
  We can apply Theorem \ref{thm:main2} to
 the slice conditional expectations of a minimal tensor product
 to say something about this question. Indeed,
the conditional expectation
$\rho\otimes\operatorname{id}_B\colon A\otimes B\to 1\otimes B$
is selfless whenever $(A,\rho)$ is (scalar) selfless and $B$
is exact or whenever $(A,\rho)$ is completely selfless and $B$ is arbitrary
\cite[Theorem~5.4(iv) and Remark~5.5]{GJKEPR}.  Preservation
of approximate units of ideals is guaranteed once either $A$ or $B$
is exact.
 Thus, Theorem~\ref{thm:main2} implies: 


\begin{corollary}\label{cor:tensor}
Let $(A,\rho)$ be a scalar C*-probability space and let $B$
be a unital C*-algebra.
Suppose that either $(A,\rho)$ is selfless and $B$ is exact,
or $(A,\rho)$ is completely selfless and $A$ is exact.
Then the minimal tensor product $A\otimes B$ is pure and every lower semicontinuous $[0,\infty]$-valued $2$-quasitrace on $A\otimes B$
is a trace.
\end{corollary}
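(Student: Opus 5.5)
The plan is to apply Theorem~\ref{thm:main2} to the slice map
$$
\phi=\rho\otimes\operatorname{id}_B\colon A\otimes B\to A\otimes B,
$$
which is a ucp map with range $1\otimes B$. It is a conditional expectation onto $1\otimes B$, and in particular it is a $1\otimes B$-bimodule map. Two hypotheses of Theorem~\ref{thm:main2} need checking.

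\emph{Weak selflessness.} Under either alternative in the statement, $\phi$ is selfless by \cite[Theorem~5.4(iv) and Remark~5.5]{GJKEPR}. Selflessness implies weak selflessness. I am reading the second alternative as the introduction phrases it: $(A,\rho)$ completely selfless with $B$ arbitrary. The approximate-unit step below uses exactness of one of the two factors, which is consistent with the printed hypothesis that $A$ is exact.

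\emph{Preservation of approximate units.} First, selflessness of $(A,\rho)$ forces $A$ to be simple; this is recalled in the introduction, in both the tracial and the purely infinite case. Next, I would invoke the standard description of ideals in minimal tensor products with a simple factor. If $A$ is simple and one of $A$, $B$ is exact, then every closed two-sided ideal $I\subseteq A\otimes B$ has the form $I=A\otimes J$, where
$$
J=\{b\in B: 1\otimes b\in I\}.
$$
This is where exactness enters, through Tomiyama's property (F) and Kirchberg's results on exactness. I expect this to be the main point requiring care: one must cite the precise form of the ideal-structure result that applies when only $B$ is exact.

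Granting the ideal description, let $(e_\lambda)$ be an approximate unit of $I$ and set $b_\lambda\in B$ by $\phi(e_\lambda)=1\otimes b_\lambda$. Take $b\in J$. Then $1\otimes b\in I$, so $e_\lambda(1\otimes b)\to 1\otimes b$. Since $\phi$ is a contractive $1\otimes B$-bimodule map,
$$
1\otimes b_\lambda b=\phi\bigl(e_\lambda(1\otimes b)\bigr)\to\phi(1\otimes b)=1\otimes b .
$$
Hence $(1\otimes b_\lambda)(a\otimes b)=(1\otimes b_\lambda b)(a\otimes 1)\to a\otimes b$ for all $a\in A$ and $b\in J$. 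The same holds on the right by taking adjoints, since $e_\lambda$ and $\phi$ are self-adjoint.

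The elements $1\otimes b_\lambda$ are positive contractions. So by linearity, density of $A\odot J$ in $A\otimes J=I$, and a standard $\epsilon/3$ argument, $(\phi(e_\lambda))$ acts as an approximate unit on all of $I$. If the definition also requires the net to lie in $I$, this needs a separate check, which I would handle in the same way.

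\emph{Conclusion.} With both hypotheses verified, Theorem~\ref{thm:main2} gives that $A\otimes B$ is pure and that every lower semicontinuous $[0,\infty]$-valued $2$-quasitrace on it is a trace.
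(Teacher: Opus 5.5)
Your proposal is correct and follows essentially the same route as the paper. You get selflessness of $\rho\otimes\operatorname{id}_B$ from the cited result of Gould--Jekel--Kunnawalkam Elayavalli--Patchell--Robert, and the ideals $A\otimes J$ from simplicity of $A$ plus exactness of one factor (Blanchard--Kirchberg). Your direct approximate-unit computation is just the relevant direction of Lemma~\ref{lem:expectation-approximate-units}, specialized to $1\otimes B\subseteq A\otimes B$, and the membership check you defer is immediate: the slice map sends $A\odot J$, and hence by continuity $A\otimes J$, into $1\otimes J\subseteq I$.
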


Choosing $A=C_r^*(\mathbb F_2)$,
which is both exact and completely selfless (the latter by \cite{OZA}),
we deduce from Corollary \ref{cor:tensor} that for any unital C*-algebra $B$ the minimal tensor product $C_r^*(\mathbb F_2)\otimes B$ is pure and its l.s.c.
$[0,\infty]$-valued 2-quasitraces are traces. Choosing $A=M_{2^\infty}$, which is selfless by \cite{selfless}, and $B$ unital and exact, we obtain
that every l.s.c. $[0,\infty]$-valued $2$-quasitrace
on $M_{2^\infty}\otimes B$ is a trace. Moreover, every lower semicontinuous $[0,\infty]$-valued
$2$-quasitrace on $B$ extends to $M_{2^\infty}\otimes B$  (by writing $M_{2^\infty}\otimes B=\varinjlim M_{2^n}(B)$). We thus obtain  a new proof of Haagerup's theorem \cite{haag} that  bounded 2-quasitraces on an exact C*-algebra are traces, and of
its extension by Blanchard and Kirchberg to lower semicontinuous,
$[0,\infty]$-valued $2$-quasitraces
\cite[Remark~2.29(i)]{blanchard-kirchberg}.

\begin{corollary}[Haagerup; Blanchard-Kirchberg]\label{cor:haagerup}
Let $B$ be an exact C*-algebra. Then every lower semicontinuous $[0,\infty]$-valued $2$-quasitrace on $B$ is a trace.
\end{corollary}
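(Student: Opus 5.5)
The plan is to deduce Corollary~\ref{cor:haagerup} from Corollary~\ref{cor:tensor}, applied with $A=M_{2^\infty}$ and $\rho$ its unique tracial state. First reduce to the case where $B$ is unital. Given a lower semicontinuous $[0,\infty]$-valued $2$-quasitrace $\tau$ on $B$, consider the minimal unitization $\tilde B$; it is exact, since exactness passes to unitizations. We extend $\tau$ to $\tilde B$ by setting $\tilde\tau(x)=\tau(x)$ for $x\in B_+$ and $\tilde\tau(x)=\infty$ for positive $x\notin B$. This extension should again be a lower semicontinuous $2$-quasitrace. One checks this via the standard correspondence between such quasitraces and functionals on the Cuntz semigroup: the ideal $\Cu(B)\subseteq\Cu(\tilde B)$ is an order-ideal, and assigning $\infty$ off it preserves additivity, order and suprema. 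If $\tilde\tau$ is additive, then so is its restriction $\tau$. So we may assume $B$ is unital and exact.

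Next, $(M_{2^\infty},\rho)$ is selfless by \cite{selfless}, so Corollary~\ref{cor:tensor} applies. It tells us that every lower semicontinuous $[0,\infty]$-valued $2$-quasitrace on $M_{2^\infty}\otimes B$ is a trace. It therefore suffices to extend $\tau$ to a lower semicontinuous $2$-quasitrace $\hat\tau$ on $M_{2^\infty}\otimes B$ with $\hat\tau(e_{11}\otimes b)$ a fixed positive multiple of $\tau(b)$. Here $e_{11}$ denotes a rank-one projection in some $M_{2^n}\subseteq M_{2^\infty}$; more simply, we ask that $\hat\tau(1\otimes b)=\tau(b)$. Additivity of $\hat\tau$ then gives additivity of $\tau$ on $B_+$ via $b\mapsto 1\otimes b$, and hence linearity on $B$ for a quasitrace.

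The extension is built along $M_{2^\infty}\otimes B=\varinjlim M_{2^n}(B)$, with connecting maps $x\mapsto x\otimes 1_2$. On $M_{2^n}(B)$ define $\tau_n=2^{-n}(\mathrm{Tr}\otimes\tau)$, i.e.\ the canonical extension of the $2$-quasitrace $\tau$ to matrices, normalized. The $2$-quasitrace axiom guarantees that $\tau$ extends uniquely to $M_k(B)$, compatibly with the amplification $M_k\to M_{2k}$. Hence the $\tau_n$ are compatible with the connecting maps, and they define a $2$-quasitrace on the algebraic union. To pass to the closure we use lower semicontinuity. For $x\ge0$ in the inductive limit, set $\hat\tau(x)=\lim_{\epsilon\to0}\liminf_n \tau_n((x_n-\epsilon)_+)$, with $x_n\in M_{2^n}(B)$ approximating $x$. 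Equivalently, we can work with the induced functional on $\Cu(M_{2^\infty}\otimes B)=\sup$-completion of $\bigcup\Cu(M_{2^n}(B))$ and use continuity of $\Cu$ under inductive limits. Through the bijection between lower semicontinuous quasitraces and functionals on $\Cu$, this gives a lower semicontinuous $2$-quasitrace extending $\tau_n$ on each $M_{2^n}(B)$.

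The main obstacle is this last step: ensuring that the extension to the limit is well defined and lower semicontinuous. It also has to remain a $2$-quasitrace, not merely a functional on the union. I would handle it via the Cuntz-semigroup picture. A compatible family of functionals on the $\Cu(M_{2^n}(B))$ induces a functional on $\Cu(\varinjlim)$, because $\Cu$ commutes with inductive limits. Such functionals correspond bijectively to lower semicontinuous $2$-quasitraces, by the Blanchard--Kirchberg/Elliott--Robert--Santiago correspondence.
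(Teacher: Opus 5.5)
Your proposal is correct and takes the same route as the paper. You apply Corollary~\ref{cor:tensor} with $A=M_{2^\infty}$ and $B$ unital and exact, then extend $\tau$ along $M_{2^\infty}\otimes B=\varinjlim M_{2^n}(B)$ to a lower semicontinuous $2$-quasitrace $\hat\tau$ with $\hat\tau(1\otimes b)=\tau(b)$; the paper indicates this extension in only one line. Your reduction to unital $B$, extending $\tau$ by $\infty$ off $B$ in $\tilde B$, supplies a step the paper leaves implicit. One point to tighten: functionals on $\Cu$ are not $\Cu$-morphisms, so "$\Cu$ commutes with inductive limits" does not by itself produce the limit functional. You should define it directly, e.g.\ as $\sup d_{\tau_n}(t)$ over classes $t$ from finite stages whose images are way below the given element; checking this is routine.
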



We work in two settings in parallel: assuming only the existence of a 
weakly selfless ucp map $\phi\colon A\to A$, we obtain regularity properties for full elements of the  Cuntz semigroup and we show that bounded 2-quasitraces on $A$ are traces.
In order to extend these results to non-full Cuntz elements and l.s.c. $[0,\infty]$-valued 2-quasitraces, we assume that the cp map $\phi$ preserves approximate units of ideals of $A$. The proof strategies are essentially the same in the two settings, but the case of full Cuntz elements is less technical. We treat it  first and use it as a template for the other case.

En route to proving  Theorem \ref{thm:main2}, we 
introduce a property which  we  term strict $\phi$-comparison. 
Strict $\phi$-comparison allows us to deduce Cuntz comparison
of positive elements from comparison of their generalized
$\phi$-ranks $d_\phi(\cdot)$
(see Definition~\ref{def:strict-phi-comparison}).
We show that if $(A,\phi)$ is weakly selfless and $\phi$ preserves
approximate units of ideals, then $(A,\phi)$ has strict
$\phi$-comparison.
In turn, strict $\phi$-comparison implies that the Cuntz semigroup of $A$
is almost unperforated and that $2$-quasitraces on $A$ are traces; equivalently, $A$ has strict comparison by traces. Weak selflessness and the preservation of approximate units of ideals by $\phi$ also imply that $A$ satisfies the global Glimm halving property. 
Finally, combining strict comparison by traces with the global Glimm
halving property, we obtain pureness of $A$ by Thiel's recent result
\cite[Theorem~4.4]{thiel-ranks}.

\begin{tikzpicture}[
  statement/.style={
    align=center,
    text width=3.9cm,
    minimum height=1.1cm,
    inner sep=4pt
  },
  implication/.style={
    double,
    double equal sign distance,
    -{Implies},
    shorten <=4pt,
    shorten >=4pt
  },
  every node/.style={font=\small}
]

\node[statement] (selfless)
  {Weakly selfless and\\preserving a.u.'s of ideals};
\node[statement, below=0.8cm of selfless] (comparison)
  {Strict $\phi$-comparison};
\node[statement, right=0.9cm of selfless] (glimm)
  {Global Glimm halving};
\node[statement] (unperforation) at (glimm |- comparison)
  {Almost unperforation\\and $2$-quasitraces are traces};

\draw[implication] (selfless.south) -- (comparison.north);
\draw[implication] (selfless.east) -- (glimm.west);
\draw[implication] (comparison.east) -- (unperforation.west);

\coordinate (midpoint) at
  ($(glimm.east)!0.5!(unperforation.east)$);
\node[inner sep=4pt] (joint) at
  ($(midpoint)+(0.65cm,0)$) {$\land$};
\draw[line width=0.45pt] (glimm.east) -| (joint.north);
\draw[line width=0.45pt] (unperforation.east) -| (joint.south);

\node[statement, right=0.8cm of joint] (pure)
  {Pure\\and $2$-quasitraces are traces};
\draw[implication] (joint.east) -- (pure.west);

\end{tikzpicture}

\emph{AI use statement}: ChatGPT 6.0 Pro was used interactively at various stages in developing the arguments and proofreading the paper. The authors take full responsibility for its contents.

L.R. acknowledges support from NSF grant DMS-2555075.

\section{Preliminaries}
\subsection{Selfless and weakly selfless cp maps}
Throughout, $A$ is a unital C*-algebra. 
Let $\phi\colon A\to A$ be a completely positive (cp) map.
Let $H_\phi=A\otimes_\phi A$ be its KSGNS C*-correspondence
(\cite{brown-ozawa}). Write $\xi_\phi\in H_\phi$ for the class of $1\otimes1$, and note that  $H_\phi=\overline{A\xi_\phi A}$. 

Let $\mathcal F_\phi$ be the Fock correspondence obtained from $H_\phi$:
$$
\mathcal F_\phi
=A\oplus\bigoplus_{n=1}^{\infty}H_\phi^{\otimes_A n}.
$$
Let $\ell_\phi\in B(\mathcal F_\phi)$ denote the left creation operator associated to $\xi_\phi$ and  set $s_\phi=\ell_\phi+\ell_\phi^*$.
The Toeplitz-Pimsner algebra and Shlyakhtenko's algebra
of the semicircular $s_\phi$ are, respectively,
$$
\cT_\phi=C^*(A,\ell_\phi),
\qquad
\cS_\phi=C^*(A,s_\phi).
$$
The vacuum expectation $E_\phi\colon\cT_\phi\to A$ is
given by $E_\phi(x)=\langle\Omega,x\Omega\rangle$, where
$\Omega=1_A$ is  the vacuum vector.  We note that
$E_\phi(s_\phi)=0$ and
$$
\ell_\phi^*a\ell_\phi = E_\phi(s_\phi a s_\phi)=\phi(a), \qquad a\in A.
$$

Following \cite{GJKEPR}, we call $(A,\phi)$ \emph{selfless}
if, for some ultrapower $A^{\cU}$, there exists a
*-homomorphism $\sigma\colon\cS_\phi\to A^{\cU}$
fixing $A$ and satisfying
$$
\phi^{\cU}\circ\sigma
=\sigma\circ\phi\circ E_\phi.
$$
We call $(A,\phi)$ \emph{weakly selfless} if there exists
a *-homomorphism $\widetilde\sigma\colon C\to A^{\cU}$ fixing $A$,
where
$$
C:=(*_A)_{i=1}^{\infty}\cS_\phi
$$
is the amalgamated free product reduced with respect to the
expectations $E_\phi$. We denote by $E\colon C\to A$
its canonical expectation onto $A$.
By \cite[Theorem~3.8]{GJKEPR}, if $(A,\phi)$ is selfless,
then such a map $\widetilde\sigma\colon C\to A^{\cU}$ can be chosen
to satisfy
$$
\phi^{\cU}\circ\widetilde\sigma
=\sigma\circ\phi\circ E.
$$
(Note that we do not assume this in the definition of weak selflessness.)
In particular, selflessness implies weak selflessness.

\begin{lemma}\label{lem:theta}
Let $\phi\colon A\to A$ be cp and (weakly) selfless and let
$b\in A$ be positive.
Then $\theta(a)=\phi(b\phi(a)b)$ is a (weakly) selfless cp map.
\end{lemma}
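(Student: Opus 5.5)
The idea is to realize the semicircular system for $\theta$ inside a free product built from the one for $\phi$. Concretely, I would build a $*$-homomorphism $\pi\colon\cS_\theta\to C$ fixing $A$, where $C=(*_A)_{i}\cS_\phi$ is the amalgamated free product. Call the canonical generators of the copies $s_1,s_2,\dots$. The candidate for the image of $s_\theta$ is the self-adjoint element
$$
t=s_1 b s_2 + s_2 b s_1 \in C,
$$
or more generally $t=\tfrac{1}{\sqrt2}(x+x^*)$ with $x=s_1bs_2$ and some normalization. The reason for this choice is that $s_2$ applied to the vacuum plays the role of $\xi_\phi$, and then $b$ followed by $s_1$ mimics the tensor product $\xi_\phi\otimes b\xi_\phi$. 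That tensor product realizes $H_\theta$ inside $H_\phi\otimes_A H_\phi$, as a sub-correspondence generated by $\xi_\phi\otimes b\,\xi_\phi$, since $\langle \xi_\phi\otimes b\xi_\phi, a\,\xi_\phi\otimes b\xi_\phi\rangle=\phi(b\phi(a)b)=\theta(a)$. A cleaner route uses creation operators: set $L=\ell_1 b\,\ell_2$ in the Toeplitz free product. Then $L^*aL=\ell_2^*b\,\ell_1^*a\ell_1\,b\ell_2=\theta(a)$, and freeness gives $\ell_2^*\ell_1=0$-type relations.

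Step 1 is to check the distribution. By Shlyakhtenko's characterization, $C^*(A,\ell_\theta)$ and $\cS_\theta$ are determined by the $A$-valued distribution of $s_\theta$ with respect to $E_\theta$, which is $A$-valued semicircular with covariance $\theta$. So I will show that an element $t\in C$ built from $s_1,s_2,b$ is $A$-valued semicircular with covariance $\theta$ with respect to $E$. Once that holds, there is a conditional-expectation-preserving $*$-homomorphism $\cS_\theta\to C^*(A,t)\subseteq C$ fixing $A$; it is injective on the GNS image, and since the reduced algebras are determined by the GNS representation of the expectation, it exists. To get the semicircular law I plan to work in the full Fock module $\mathcal F$ over $H_\phi\oplus H_\phi$. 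There I would use an operator of the form $\ell_1 b \ell_2$ (a product of creation operators) and symmetrize appropriately, possibly by passing through the isometry $H_\theta\hookrightarrow H_\phi\otimes_A H_\phi$ and the induced inclusion of full Fock modules. An operator-valued Voiculescu-type argument then identifies $\ell_1b\ell_2+(\ell_1b\ell_2)^*$ restricted to the appropriate submodule as $s_\theta$.

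Step 2 is to handle the fact that $\ell_1b\ell_2+\text{h.c.}$ is not a polynomial in $s_1,s_2$ alone. Instead I would use four free copies and the element $t=s_1bs_2+s_2bs_1$-like combinations, or simply verify moments directly. Using the free (boolean-free) cumulant formula, the $A$-valued moments of $s_1bs_2$ with respect to $E$ reduce, by freeness with amalgamation and the semicircular (only second-order cumulants) property, to non-crossing pairings. I expect the needed element to be $t=s_1 b s_2$-type with $E(t a t)=\phi(b\phi(a)b)$ and with higher cumulants vanishing. This verification is the main obstacle: one must choose $t$ so that all $A$-valued cumulants beyond order two vanish. My first attempt is a variant with $t=\ell$-words realized inside $C$ via infinitely many copies, for instance $t=\lim$ of normalized sums $\frac1{\sqrt n}\sum_{k} s_{2k-1}bs_{2k}$, which is self-adjoint up to symmetrization. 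An operator-valued free central limit theorem then makes its distribution exactly semicircular with covariance $\lim E(s_1 b s_2\, a\, s_2 b s_1)$-type terms, which equals $\theta(a)$ after symmetrizing, and the limit is taken in the ultrapower $C^{\cU}$.

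Step 3 is to conclude. Given $\pi\colon\cS_\theta\to C^{\cU}$ (or into $C$), the infinite amalgamated free product $(*_A)\cS_\theta$ maps into $C^{\cU}$ by using disjoint families of copies of $\cS_\phi$ for different factors. Disjoint subfamilies are free with amalgamation over $A$, so the reduced free product embeds via compatible expectations. Composing with $\widetilde\sigma^{\cU}\colon C^{\cU}\to (A^{\cU})^{\cU}$, and using that an iterated ultrapower is again an ultrapower, gives weak selflessness of $\theta$. In the selfless case, I additionally track the intertwining $\phi^{\cU}\circ\widetilde\sigma=\sigma\circ\phi\circ E$ for the single-copy map; this requires $\theta^{\cU}\circ\sigma'=\sigma'\circ\theta\circ E_\theta$. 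That follows by applying the $\phi$-intertwining twice, together with $E(tat)=\theta(a)$-type identities, since $\theta=\phi\circ(b\,\cdot\,b)\circ\phi$.
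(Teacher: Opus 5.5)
Your route differs from the paper's, which never builds a semicircular element for $\theta$. For invertible $b$ the paper writes $\theta(x)=b^{-1/2}\psi^2(b^{-1/2}xb^{-1/2})b^{-1/2}$ with $\psi(x)=b^{1/2}\phi(b^{1/2}xb^{1/2})b^{1/2}$. It then uses the quoted permanence of (weak) selflessness under such conjugations and under squaring, and passes to general $b$ through $b+\frac1n$ and norm limits.

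As written, your proposal does not establish its central step, a $*$-homomorphism $\cS_\theta\to C$ or $C^{\cU}$ fixing $A$. The explicit candidate fails. Take $A=\C$, $\phi=\mathrm{id}$, $b=1$, and let $\tau$ be the vacuum state. Then $t=s_1s_2+s_2s_1$ has $\tau(t^2)=2$ but $\tau(t^4)=10$, whereas a semicircular element of variance $2$ has fourth moment $8$. So the claim that higher cumulants vanish is false for elements of this shape, and an approximation in $C^{\cU}$ really is needed.

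Your CLT fallback is the right repair: the elements $x_k=s_{2k-1}bs_{2k}+s_{2k}bs_{2k-1}$ are $E$-centered, free over $A$, and satisfy $E(x_kax_k)=2\theta(a)$. But convergence of the $A$-valued moments of $t_n=(2n)^{-1/2}\sum_{k\le n}x_k$ is not what is needed. You need two norm statements:
\begin{enumerate}[(a)]
\item $\sup_n\|t_n\|<\infty$, just to get an element $T\in C^{\cU}$;
\item $\|p(A,T)\|\leq\|p(A,s_\theta)\|$ for $*$-polynomials $p$, to get a map out of the reduced algebra $\cS_\theta$.
\end{enumerate}
Equality of $A$-valued distributions only gives the GNS quotient map $C^*(A,T)\to\cS_\theta$. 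That map is injective only if $E^{\cU}$ restricted to $C^*(A,T)$ is GNS-faithful, and GNS-faithfulness does not pass to subalgebras (take the vector state $x\mapsto x_{11}$ on $M_2$ restricted to the diagonal). So your sentence ``it is injective on the GNS image \dots\ it exists'' has the direction reversed. The same gap recurs in Step 3 for the reduced free product $(*_A)\cS_\theta$.

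One way to close it is to work on the Fock module over $\bigoplus_i H_\phi$. Let $\xi_i$ be the copy of $\xi_\phi$ in the $i$-th summand, and $\ell(\zeta)$ the creation operator by $\zeta\in H^{\otimes 2}$. The relations $\ell_i^*a\ell_j=\delta_{ij}\phi(a)$ give $t_n=\ell(\zeta_n)+\ell(\zeta_n)^*+R_n$, where:
\begin{itemize}
\item $\zeta_n=(2n)^{-1/2}\sum_k(\xi_{2k-1}\otimes b\xi_{2k}+\xi_{2k}\otimes b\xi_{2k-1})$ satisfies $\langle\zeta_n,a\zeta_n\rangle=\theta(a)$;
\item $R_n=(2n)^{-1/2}(X_n+X_n^*)$ with $X_n=\sum_k\ell_{2k-1}b\ell_{2k}^*$ and $\|X_n\|\leq\|b\|\|\phi(1)\|$, so $\|R_n\|\to 0$.
\end{itemize}
A Coburn/Fowler--Raeburn type uniqueness theorem then identifies $C^*(A,\ell(\zeta_n))$ with $\cT_\theta$ canonically. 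Its hypothesis holds because the vacuum summand is orthogonal to the range of the creation operators and $A$ acts faithfully on it. This gives (a), (b), and $E^{\cU}\circ\pi=E_\theta$. Using disjoint index sets gives orthogonal vectors $\zeta_n^{(j)}$, which takes care of the free product.

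Once $E^{\cU}\circ\pi=E_\theta$ holds for the whole distribution, and not just $E(tat)=\theta(a)$, your intertwining argument for the selfless case goes through. For this you need the map $\widetilde\sigma$ on all of $C$ with $\phi^{\cU}\circ\widetilde\sigma=\phi\circ E$. Completed this way, your argument would be self-contained and would handle non-invertible $b$ without a limiting step. The paper's proof, by contrast, is a few lines modulo the cited permanence results.
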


\begin{proof}
First assume $b$ is invertible. This case follows from permanence properties of (weak) selflessness. We deal with the case that $\phi$ is selfless, the weakly selfless case follows by  the same arguments.

By \cite[Theorem~5.4(i)]{GJKEPR}, the cp map
$$
\psi(x)=b^{\frac12}
\phi(b^{\frac12}xb^{\frac12})b^{\frac12}
$$
is selfless.
By \cite[Corollary~5.6]{GJKEPR}, $\psi^2$ is selfless,
and by \cite[Theorem~5.4(i)]{GJKEPR}, so is
$$
\theta(x)=
b^{-\frac12}
\psi^2(b^{-\frac12}xb^{-\frac12})b^{-\frac12}
=\phi(b\phi(x)b).
$$

Now let $b$ be positive. Then $b_n=b+\frac1n$ is invertible, so 
$$
\theta_n(x)=\phi(b_n\phi(x)b_n)
$$
is selfless and $\|\theta_n-\theta\|\rightarrow 0$, where $\theta(x)=\phi(b\phi(x)b)$. By \cite[Theorem 5.4 (ii)]{GJKEPR}, a point-norm limit
of weakly selfless maps is again weakly selfless. The proof 
can be adapted to also show that an operator norm limit of selfless maps
is selfless. We thus conclude that  $\theta$ is selfless.
\end{proof}

Let $\phi\colon A\to A$ be a cp map.
Set 
$$
C=(*_A)_{i=1}^{\infty}\cS_\phi, \quad \widetilde \phi=\phi\circ E,
$$
where $E\colon C\to A$ denotes  the canonical expectation. Regard
$\widetilde\phi=\phi\circ E$ as a cp map on $C$.

\begin{lemma}\label{lem:free-dilation}
$(C,\widetilde\phi)$ is selfless.
\end{lemma}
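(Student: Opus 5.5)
We need to show that $(C,\widetilde\phi)$ is selfless. Concretely, we want a $*$-homomorphism $\sigma\colon \cS_{\widetilde\phi}\to C^{\cU}$ that fixes $C$ and satisfies $\widetilde\phi^{\cU}\circ\sigma=\sigma\circ\widetilde\phi\circ E_{\widetilde\phi}$.

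The plan is to first identify $\cS_{\widetilde\phi}=C^*(C,s_{\widetilde\phi})$ with an amalgamated free product. Because $\widetilde\phi=\phi\circ E$ factors through the expectation $E\colon C\to A$, the KSGNS correspondence satisfies
$$
H_{\widetilde\phi}= C\otimes_{\widetilde\phi} C\cong C\otimes_A H_\phi\otimes_A C,
$$
with $\xi_{\widetilde\phi}\mapsto 1\otimes\xi_\phi\otimes 1$. Indeed,
$$
\langle c_1\otimes 1\otimes c_2, c_1'\otimes1\otimes c_2'\rangle=c_2^*\phi(E(c_1^*c_1'))c_2'
$$
on both sides, using that the inner product on $C\otimes_A H_\phi$ is computed through $E$. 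This is the standard fact that the semicircular of a cp map which factors through a conditional expectation onto $A$ is freely independent from $C$ with amalgamation over $A$. So
$$
(\cS_{\widetilde\phi},E_{\widetilde\phi})\cong (C,E)*_A(\cS_\phi,E_\phi)
$$
as reduced amalgamated free products. Under this identification $s_{\widetilde\phi}\mapsto s_\phi$ in the new copy, and $E_{\widetilde\phi}$ restricted to $C$ is the identity. To prove the isomorphism, compare the vacuum expectations on alternating words in $C\ominus A$ and $s_\phi$-words via the Fock space description, or cite the corresponding result of \cite{GJKEPR}.

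The key step uses
$$
(C,E)*_A(\cS_\phi,E_\phi)\cong (*_A)_{i=1}^{\infty}\cS_\phi = C,
$$
by associativity of reduced amalgamated free products, adding one more copy. This does not immediately give a map into $C$ fixing $C$. So instead we use the shift into the ultrapower. For each $n$, define $\sigma_n\colon C*_A\cS_\phi\to C$ as follows. On $C$, it is the canonical identification of the first $n$ copies, and it sends copies $i>n$ to copies $i+1$. It sends the extra $\cS_\phi$ to copy $n+1$. Each $\sigma_n$ is an injective $*$-homomorphism intertwining the expectations onto $A$, by freeness and the uniqueness of reduced amalgamated free products. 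Define $\sigma=(\sigma_n)_n\colon C*_A\cS_\phi\to C^{\cU}$, with $\cU$ a free ultrafilter on $\N$.

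For $c$ in the algebraic span of words from the first $m$ copies, we have $\sigma_n(c)=c$ for all $n\ge m$. Since the $\sigma_n$ are contractive, $\sigma$ agrees with the diagonal embedding on all of $C$.

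It remains to check the intertwining relation
$$
\widetilde\phi^{\cU}\circ\sigma=\sigma\circ\widetilde\phi\circ E_{\widetilde\phi}.
$$
Since each $\sigma_n$ preserves the expectations onto $A$, we have $E\circ\sigma_n=E'$, where $E'$ is the canonical expectation of $C*_A\cS_\phi$ onto $A$; under the identification, $E'$ equals $E\circ E_{\widetilde\phi}$. Then
$$
\widetilde\phi(\sigma_n(x))=\phi(E(\sigma_n(x)))=\phi(E(E_{\widetilde\phi}(x)))=\widetilde\phi(E_{\widetilde\phi}(x)).
$$
The right-hand side lies in $A$, so it is fixed by $\sigma_n$. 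This gives the relation coordinatewise, and hence in the ultrapower.

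The main obstacle is the identification $\cS_{\widetilde\phi}\cong C*_A\cS_\phi$ in the first step. It requires checking that the Fock correspondence $\mathcal F_{\widetilde\phi}$ matches the free product Hilbert module. In particular, the vacuum expectation of mixed words $c_0 s c_1 s\cdots$ must be given by the free moment–cumulant recursion with $\phi\circ E$. I would verify this by a direct computation on elementary tensors $c_0\xi c_1\xi\cdots\xi c_k$, using $\langle\xi,c\,\xi\rangle=\phi(E(c))$.
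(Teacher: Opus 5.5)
Your proposal is correct and follows essentially the same route as the paper. You identify $\cS_{\widetilde\phi}$ with $D=C*_A\cS_\phi$ via \cite[Lemma~3.7]{GJKEPR}, with the vacuum expectation $E_{\widetilde\phi}$ corresponding to the canonical expectation $F\colon D\to C$, so that $E\circ F$ is the expectation of $D$ onto $A$. You then use the reindexing isomorphisms $D\to C$ fixing the first $n$ factors to define $\sigma$ into $C^{\cU}$, and your density argument and coordinatewise check of $\widetilde\phi^{\cU}\circ\sigma=\sigma\circ\widetilde\phi\circ E_{\widetilde\phi}$ just spell out what the paper states tersely.
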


\begin{proof}
Set $D=C*_A\cS_\phi$, and let $F\colon D\to C$ be the
canonical expectation.
By \cite[Lemma~3.7]{GJKEPR}, we have
$\cS_{\widetilde\phi}\cong D$, with the isomorphism fixing $C$
and identifying the vacuum expectation with $F$.

For each $n$, reindexing factors in $D=((*_A)_{i=1}^{\infty}\cS_\phi)*_A\cS_\phi$
gives an
isomorphism $\rho_n\colon D\to C$ which fixes the first
$n$ factors $\cS_\phi$ in $C$ and preserves the expectations onto $A$.
Thus, $\rho_n(c)\to c$ for every $c\in C$, and
$E\circ\rho_n=E\circ F$.
Fix a free ultrafilter $\cU$ on $\N$ and define
$$
\sigma(d)=[(\rho_n(d))_n]\in C^{\cU}.
$$
Then $\sigma$ fixes $C$ and satisfies
$$
\widetilde\phi^{\cU}\circ\sigma
=\sigma\circ\widetilde\phi\circ F.
$$
Hence, $(C,\widetilde\phi)$ is selfless.
\end{proof}

\subsection{Pure C*-algebras}\label{subsec:pure-cstar-algebras}
Let $A$ be a C*-algebra and denote by $A_+$ the set of positive elements of $A$. Given $a,b\in A_+$  we write $a\precsim b$
to indicate that $a$ is Cuntz subequivalent to $b$, and $a\sim b$ to indicate that they are Cuntz equivalent. We denote by $\Cu(A)$ the Cuntz semigroup of $A$, i.e., the set of Cuntz equivalence classes of $(A\otimes\mathcal K)_+$. Below, we recall some definitions and facts around the Cuntz semigroup and regularity of C*-algebras. We refer the reader  to \cite{GardellaPerera} for background on the Cuntz semigroup.

The Cuntz semigroup $\Cu(A)$ is said to be \emph{almost unperforated} if 
$$
(k+1)[a]\leq k[b]\Longrightarrow [a]\leq [b],
$$
for all $[a],[b]\in \Cu(A)$ and $k\in \N$. If the above implication holds
whenever $[b]$ is a full element, i.e., $\infty [b]$ is the largest element in $\Cu(A)$, then we say that $\Cu(A)$ is almost unperforated on full elements. 

$\Cu(A)$ is said to be \emph{almost divisible} if for all $[a]\in \Cu(A)$,  $[a']\ll [a]$, and $N\in \N$, there exists $[b]\in \Cu(A)$ such that 
$$
N[b]\leq [a],\quad [a']\ll (N+1)[b].
$$
Here $\ll$ denotes the way-below relation in $\Cu(A)$ (\cite[Definition 4.1]{GardellaPerera}).

If $\Cu(A)$ is both almost unperforated and almost divisible then 
$A$ is said to be a \emph{pure} C*-algebra.

Almost unperforation can be reformulated as a strict comparison property
involving 2-quasitraces on $A$. Let $QT(A)$ denote the set of lower semicontinuous  $[0,\infty]$-valued 2-quasitraces on $A$. Each $\tau\in QT(A)$ gives rise to $d_\tau\colon \Cu(A)\to [0,\infty]$ given by
$$
d_\tau([a])=\lim_n \tau(a^{\frac1n}).
$$
Then  $\Cu(A)$ is almost unperforated if and only if for all $0\leq\delta<1$ and $[a],[b]\in \Cu(A)$,
$$
d_\tau([a])\leq \delta d_\tau([b])\text{ for all }\tau\in QT(A)
\Longrightarrow [a]\leq [b].
$$
See \cite{ERS}. If the same implication holds when the inequality 
$d_\tau([a])\leq \delta d_\tau([b])$
is required only for those $\tau\in QT(A)$ that are traces, then we say that $A$ has strict comparison by traces.
Strict comparison by traces implies that 2-quasitraces in $QT(A)$ are traces \cite[Theorem~3.6(i)]{NgRobert}. Consequently, we have the following:

\begin{proposition}\label{prop:comp-by-traces}
The C*-algebra $A$ has strict comparison by traces if and only if $\Cu(A)$ is almost unperforated and the 2-quasitraces in $QT(A)$ are traces. 
\end{proposition}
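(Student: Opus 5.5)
The plan is to prove the two implications separately, using the Elliott--Robert--Santiago reformulation of almost unperforation recalled above together with \cite[Theorem~3.6(i)]{NgRobert}.

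For the forward direction, assume $A$ has strict comparison by traces. By \cite[Theorem~3.6(i)]{NgRobert}, every $\tau\in QT(A)$ is a trace. Then the set of traces in $QT(A)$ is all of $QT(A)$. So the hypothesis ``$d_\tau([a])\le\delta d_\tau([b])$ for all traces $\tau\in QT(A)$'' coincides with the hypothesis ``for all $\tau\in QT(A)$''. Strict comparison by traces therefore says exactly that the implication in the ERS characterization holds for all $0\le\delta<1$ and $[a],[b]\in\Cu(A)$. By \cite{ERS}, $\Cu(A)$ is almost unperforated.

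For the converse, assume $\Cu(A)$ is almost unperforated and every element of $QT(A)$ is a trace. Almost unperforation gives the ERS implication with the inequality required for all $\tau\in QT(A)$. Since all of these are traces, this is literally the strict comparison by traces condition.

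There is no real obstacle. Both directions amount to observing that, once $QT(A)$ consists of traces, the two comparison conditions quantify over the same set of functionals. The only substantive inputs are the cited results \cite{ERS} and \cite[Theorem~3.6(i)]{NgRobert}. The one point to check is that the statement of \cite[Theorem~3.6(i)]{NgRobert} is stated for lower semicontinuous $[0,\infty]$-valued 2-quasitraces with strict comparison in the same $\delta$-form used here; if its formulation differs slightly (e.g.\ uses $d_\tau(a)<d_\tau(b)$ with $d_\tau(b)<\infty$), I would add a short remark that the $\delta$-form implies it.
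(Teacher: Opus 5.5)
Your proof is correct and is essentially the paper's argument. The paper likewise combines the ERS characterization of almost unperforation with \cite[Theorem~3.6(i)]{NgRobert}, observing that once every element of $QT(A)$ is a trace the two comparison conditions quantify over the same functionals. (Minor simplification: the almost-unperforation half of the forward direction does not need Ng--Robert, since an inequality holding for all $\tau\in QT(A)$ holds in particular for all traces, so strict comparison by traces directly gives the ERS condition.)
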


\subsection{Strict $\phi$-comparison}
Let $A$ be a unital C*-algebra and let $\phi\colon A\to A$ be a cp map.
Let $\phi^{**}\colon A^{**}\to A^{**}$ be its normal extension, 
and for $a\in A_+$ define
$$
d_\phi(a)=\phi^{**}(\chi_{(0,\|a\|]}(a)).
$$

\begin{remark}\label{rmk: nobidual-comparison}
Although the definition of $d_\phi$ references the bidual of $A$, we will not
need to work with it directly, since the comparisons involving
$d_\phi$ that we use below admit the following characterization:
for $a,b\in A_+$,
$$
d_\phi(a)\leq b
\Longleftrightarrow
\phi(h)\leq b
\quad\text{for every positive contraction }h\in C^*(a).
$$
\end{remark}

Given a cp map $\phi\colon A\to A$, denote by
$\phi^{(k)}\colon M_k(A)\to A$ the normalized cp map
$$
\phi^{(k)}(a)=\frac1k\sum_{i=1}^k\phi(a_{ii}).
$$

\begin{definition}\label{def:strict-phi-comparison}
We say that $(A,\phi)$ has strict $\phi$-comparison if for all
$k\in\N$, positive elements $a,b\in M_k(A)$, $\epsilon>0$,
and $0\leq\delta<1$, we have
$$
\begin{gathered}
d_{\phi^{(k)}}(a)
\leq \delta\, d_{\phi^{(k)}}\bigl((b-\epsilon)_+\bigr)\\[2pt]
a\in\operatorname{Ideal}(b)
\end{gathered}
\quad\Longrightarrow\quad
a\precsim b.
$$
We say that $(A,\phi)$ has strict $\phi$-comparison of full
elements if the same implication holds whenever $b$ is
additionally full in $M_k(A)$.
\end{definition}

\begin{lemma}\label{lem:strictphiversion}
Suppose that for all $k\in \N$, $0<\delta<1$, and positive contractions
$a,b\in M_k(A)$ we have
$$
\begin{gathered}
d_{\phi^{(k)}}(a)
\leq \delta\phi^{(k)}(b)\\[2pt]
a\in\operatorname{Ideal}(b)
\end{gathered}
\quad\Longrightarrow\quad
a\precsim b.
$$
Then $A$ has strict $\phi$-comparison. If the implication above holds whenever $b$ is additionally full, then $A$ has strict $\phi$-comparison of full elements.
\end{lemma}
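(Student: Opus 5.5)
To check strict $\phi$-comparison, fix $k\in\N$, positive $a,b\in M_k(A)$, $\epsilon>0$ and $0\le\delta<1$ with
$$
d_{\phi^{(k)}}(a)\le\delta\, d_{\phi^{(k)}}\bigl((b-\epsilon)_+\bigr)
\quad\text{and}\quad a\in\operatorname{Ideal}(b).
$$
The plan is to replace $a$ and $b$ by positive contractions $a'$ and $b'$ that fit the hypothesis of the lemma. We then conclude $a\precsim a'\precsim b'\precsim b$ from the assumed implication.

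\textbf{Choice of $b'$.} Pick a continuous function $g\colon[0,\infty)\to[0,1]$ that vanishes on $[0,\epsilon/2]$ and equals $1$ on $[\epsilon,\infty)$, and set $b'=g(b)$. Then $b'\le 1$ and $b'\in C^*(b)$, so $b'\precsim b$. Since $b'$ equals $1$ on the support of $(b-\epsilon)_+$, we have $\chi_{(0,\infty)}\bigl((b-\epsilon)_+\bigr)\le b'$ in $M_k(A)^{**}$. Applying the normal positive map $(\phi^{(k)})^{**}$ gives
$$
d_{\phi^{(k)}}\bigl((b-\epsilon)_+\bigr)\le \phi^{(k)}(b').
$$
In the language of Remark~\ref{rmk: nobidual-comparison}: every positive contraction $h\in C^*((b-\epsilon)_+)$ satisfies $h\le g(b)$, because $h$ is a function of $b$ supported where $g=1$. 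Hence $\phi^{(k)}(h)\le\phi^{(k)}(b')$ for all such $h$.

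\textbf{Choice of $a'$.} For each $\eta>0$ put $a_\eta=f_\eta(a)$, where $f_\eta$ is a continuous function with values in $[0,1]$, vanishing on $[0,\eta]$ and positive on $(\eta,\infty)$. Then $C^*(a_\eta)\subseteq C^*(a)$, so $d_{\phi^{(k)}}(a_\eta)\le d_{\phi^{(k)}}(a)$. Combining with the previous step,
$$
d_{\phi^{(k)}}(a_\eta)\le\delta\,\phi^{(k)}(b').
$$
If $\delta=0$, replace it by any $\delta'\in(0,1)$; the inequality still holds. The ideal condition also transfers: $a_\eta\in\operatorname{Ideal}(a)\subseteq\operatorname{Ideal}(b)$. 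Finally, $\operatorname{Ideal}(b)=\operatorname{Ideal}(b')$ need not hold, and this is the one point needing care. Since $b'\precsim b$, we only have $\operatorname{Ideal}(b')\subseteq\operatorname{Ideal}(b)$.

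\textbf{Main obstacle: the ideal condition.} I would handle it as follows. The condition $d_{\phi^{(k)}}(a)\le\delta\,d_{\phi^{(k)}}((b-\epsilon)_+)$ is not by itself enough to put $a$ in $\operatorname{Ideal}((b-\epsilon)_+)$. So instead of fixing $\epsilon$, shrink it. The original hypothesis with $\epsilon$ implies the same hypothesis with any smaller $\epsilon'$, because $d_{\phi^{(k)}}((b-\epsilon)_+)$ increases as $\epsilon$ decreases. Now $a_\eta=f_\eta(a)$ with $\eta>0$ has $(a-\eta)_+$-type support, so it lies in a finitely generated part of $\operatorname{Ideal}(b)$. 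Since $\operatorname{Ideal}(b)=\overline{\bigcup_{\epsilon'}\operatorname{Ideal}((b-\epsilon')_+)}$, a standard compactness (way-below) argument gives some $\epsilon'\le\epsilon$ with $a_\eta\in\operatorname{Ideal}((b-\epsilon')_+)$. Build $b'$ from this $\epsilon'$, so that $b'$ dominates $(b-\epsilon')_+$ and the ideal containment holds.

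\textbf{Conclusion.} The assumed implication then gives $a_\eta\precsim b'\precsim b$ for every $\eta>0$. Taking $f_\eta$ comparable to $(a-\eta)_+$, we get $(a-\eta)_+\precsim b$ for all $\eta$, and hence $a\precsim b$.

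\textbf{Full-element version.} If $b$ is full, take $\epsilon'$ small enough that $(b-\epsilon')_+$ is full, which is possible since fullness is an open condition in the unital case. Then $b'$ is full and the ideal condition is automatic, so the same argument applies.
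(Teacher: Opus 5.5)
Your argument is correct and uses the same idea as the paper: replace $b$ by $g(b)$ with $g\equiv 1$ on $[\epsilon,\infty)$, so that $d_{\phi^{(k)}}((b-\epsilon)_+)\le\phi^{(k)}(g(b))$, and then invoke the hypothesis. The paper avoids your ``main obstacle'' entirely by taking $g(t)=\min\{t/\epsilon,1\}$, which is strictly positive on $(0,\infty)$, so $g(b)\sim b$, $\operatorname{Ideal}(g(b))=\operatorname{Ideal}(b)$, and fullness passes to $g(b)$ automatically. With that choice only a rescaling of $a$ is needed, and the shrinking of $\epsilon$, the compactness step (which is valid, since $\operatorname{dist}(a,J)<\eta$ forces $(a-\eta)_+\in J$) and the cut-downs $a_\eta$ all become unnecessary.
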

\begin{proof}
Let $a,b\in M_k(A)$ be such that 
$$
d_{\phi^{(k)}}(a)\leq
\delta d_{\phi^{(k)}}((b-\epsilon)_+),
$$
where $0\leq \delta<1$ and $\epsilon>0$.
By rescaling $a$, we may assume that it is a contraction.
Let $g(t)=\min\{t/\epsilon,1\}$,
 for $t\in[0,\|b\|]$.
Then  $g(b)\sim b$, and
$$
d_{\phi^{(k)}}((b-\epsilon)_+)\leq\phi^{(k)}(g(b)).
$$
Note also that if $b$ is full, then so is $g(b)$. The assumption of the lemma gives  $a\precsim g(b)\sim b$, as desired.
\end{proof}

\section{From weak selflessness to strict $\phi$-comparison}
We fix a weakly selfless pair $(A,\phi)$ such  that $\phi$
is unital.  We set
$$
C=(*_A)_{i=1}^{\infty}\cS_\phi=C^*(A,s_1,s_2,\ldots),
$$
where the $s_i$ form a free $A$-valued semicircular family with covariance $\phi$. Let $E\colon C\to A$ denote  the canonical conditional expectation, and put
$\widetilde\phi=\phi\circ E$, regarded as a cp map on $C$.
By Lemma~\ref{lem:free-dilation}, $(C,\widetilde\phi)$ is
selfless.

\begin{lemma}\label{lem:preprocess}
Let $b\in A_+$ be a full positive contraction and $\gamma>1$.
Then there exists a positive contraction $b'\in C$ with $b'\sim b$
such that $\widetilde\phi(b')$ is invertible and
$\phi(b)\leq \gamma\widetilde\phi(b')$.
\end{lemma}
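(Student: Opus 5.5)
The plan is to take $b'$ to be a conjugate $y^*by$ of $b$ by an invertible element $y\in C$. Then $b'\sim b$ holds automatically, and $y$ can be chosen so that the free semicirculars $s_i$ "spread" $b$ across the whole unit after applying $E$.

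\emph{Step 1 (using fullness).} Since $b$ is full in the unital algebra $A$, the ideal it generates is all of $A$. So there are $x_1,\dots,x_n\in A$ with $\sum_i x_i^*bx_i$ invertible. Replacing $x_i$ by $x_i(\sum_j x_j^*bx_j)^{-1/2}$, we may assume
$$
\sum_{i=1}^n x_i^*bx_i=1.
$$

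\emph{Step 2 (the perturbation).} Set $w=\sum_{i=1}^n x_is_i\in C$ and $y=1+\delta w$, where $\delta>0$ is small enough that $\delta\|w\|<1$ and $c:=(1+\delta\|w\|)^2\le\gamma$. Then $y$ is invertible. Define
$$
b'=c^{-1}y^*by .
$$
This is a positive contraction, since $\|y\|^2\le c$ and $\|b\|\le 1$.

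To see that $b'\sim b$, put $z=b^{1/2}y$. Then $z^*z=cb'$ and $zz^*=b^{1/2}yy^*b^{1/2}$. Because $yy^*$ is invertible, $zz^*$ lies between two positive multiples of $b$, so $zz^*\sim b$. Since $z^*z\sim zz^*$, we get $b'\sim b$.

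\emph{Step 3 (computing the expectation).} Expand
$$
E(y^*by)=b+\delta\bigl(E(w^*b)+E(bw)\bigr)+\delta^2E(w^*bw).
$$
The linear terms vanish, because $E(bx_is_i)=bx_iE(s_i)=0$ and similarly $E(s_ix_i^*b)=0$.

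In the quadratic term, the cross terms $E(s_ix_i^*bx_js_j)$ with $i\neq j$ vanish by freeness with amalgamation over $A$, since the $s_i$ are centered. The diagonal terms are $E(s_iax_i s_i)$-type expressions; more precisely, $E(s_i x_i^*bx_i s_i)=\phi(x_i^*bx_i)$, using the identity $E_\phi(s_\phi a s_\phi)=\phi(a)$ recalled in the preliminaries. Hence
$$
E(w^*bw)=\phi\Bigl(\sum_i x_i^*bx_i\Bigr)=\phi(1)=1,
$$
since $\phi$ is unital. Therefore $E(y^*by)=b+\delta^2$.

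\emph{Step 4 (conclusion).} Applying $\phi$ gives
$$
\widetilde\phi(b')=c^{-1}\phi\bigl(E(y^*by)\bigr)=c^{-1}\bigl(\phi(b)+\delta^2\bigr).
$$
This is invertible, being at least $c^{-1}\delta^2\cdot 1$. It also satisfies $\widetilde\phi(b')\ge c^{-1}\phi(b)\ge\gamma^{-1}\phi(b)$, which is exactly $\phi(b)\le\gamma\widetilde\phi(b')$.

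The only delicate point is the vanishing of the mixed and linear terms in Step 3. This rests on the $s_i$ forming a free, centered $A$-valued semicircular family with covariance $\phi$. Free independence over $A$ (as reduced amalgamated free product factors) gives $E(s_i a s_j)=\delta_{ij}\phi(a)$ for $a\in A$, and I would verify this directly from the Fock space description.
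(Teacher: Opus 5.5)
Your proposal is correct and follows the paper's proof essentially verbatim. Like the paper, you write $1=\sum_i x_i^*bx_i$, conjugate $b$ by the invertible element $1+t\sum_i x_is_i$ with $t$ small, and use $E\bigl((1+tx)^*b(1+tx)\bigr)=b+t^2 1$. The only differences are cosmetic: you normalize by $c=(1+\delta\|w\|)^2\le\gamma$ rather than by $\gamma$ itself, and you spell out the Cuntz equivalence $b'\sim b$ and the freeness computations that the paper leaves implicit.
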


\begin{proof}
Since $b$ is full, there exist $x_1,\ldots,x_n\in A$ such that
$$
1=\sum_{i=1}^n x_i^*bx_i.
$$
Set
$$
x=\sum_{i=1}^n x_is_i\in C.
$$
Then $E(x)=0$ and
$$
E(x^*bx)=\sum_{i=1}^n\phi(x_i^*bx_i)=1.
$$

Choose $t>0$ 
such that $(1+t\|x\|)^2\leq \gamma$ and $t\|x\|<1$, and
 set 
 $$
 z=1+tx, \qquad b'=\gamma^{-1}z^*bz.
 $$
Then $z$ is invertible and  $b'$ is a positive contraction such that  
$b'\sim b$. Also, 
$$
\gamma\widetilde \phi(b')
=(\phi\circ E)(z^*bz)
=\phi(b)+t^2 1.
$$
This shows that  $\widetilde\phi(b')$ is invertible and that $\gamma\widetilde\phi(b')\geq \phi(b)$. Thus, $b'$ is as required.
\end{proof}


\begin{theorem}\label{thm:weakly-strictphi}
Let $(A,\phi)$ be weakly selfless and $\phi$ unital. Let  $0\leq \delta<1$ and let $a,b\in M_k(A)$ be positive contractions, with $b$  full.  Then
$$
d_{\phi^{(k)}}(a)
\leq \delta\phi^{(k)}(b)\Rightarrow a\precsim b.
$$
Consequently, $(A,\phi)$ has strict $\phi$-comparison of full elements.
\end{theorem}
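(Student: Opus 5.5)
The plan is to realize the comparison $a\precsim b$ inside the free dilation $C$, using a fresh semicircular element to transport $a$ underneath $b$. We then pull the result back to $A$ through the weakly selfless embedding $\widetilde\sigma\colon C\to A^{\cU}$. Cuntz subequivalence in $A^{\cU}$ between elements of $A$ gives $(a-\epsilon)_+\precsim b$ in $A$ for every $\epsilon>0$, hence $a\precsim b$. The final sentence of the theorem then follows directly from Lemma~\ref{lem:strictphiversion}.

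\emph{Step 1 (reduction to $k=1$).} I would replace $(A,\phi)$ by $(M_k(A),\phi_k)$, where $\phi_k(x)=\phi^{(k)}(x)\otimes 1_k$. This map is ucp on $M_k(A)$. I expect weak selflessness to pass to it from $(A,\phi)$ by the permanence properties of \cite{GJKEPR}: amplification, conjugation by matrix units, and averaging. The hypotheses are unchanged under this replacement, so we may assume $k=1$: $a,b\in A_+$ are contractions, $b$ is full, and $d_\phi(a)\leq\delta\phi(b)$.

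\emph{Step 2 (preprocessing).} Choose $\gamma>1$ with $\gamma\delta<1$. By Lemma~\ref{lem:preprocess}, there is a positive contraction $b'\in C$ with $b'\sim b$, $\widetilde\phi(b')$ invertible, and $\phi(b)\leq\gamma\widetilde\phi(b')$. Hence $d_{\widetilde\phi}(a)=d_\phi(a)\leq\delta'\widetilde\phi(b')$ with $\delta'=\gamma\delta<1$, and the right-hand side is now invertible. By Lemma~\ref{lem:free-dilation}, $(C,\widetilde\phi)$ is selfless. So it suffices to prove $a\precsim b'$ in $C^{\cU}$, using a free copy $\cS_{\widetilde\phi}\cong C*_A\cS_\phi$ embedded in $C^{\cU}$ over $C$.

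\emph{Step 3 (the transport element).} Let $s$ be the semicircular of $\cS_{\widetilde\phi}$, so that $E(scs)=\widetilde\phi(c)$ for $c\in C$. Put
$$
x=b'^{1/2}s\,a^{1/2}.
$$
Then $xx^*\precsim b'$ and $x^*x=a^{1/2}(sb's)a^{1/2}$. It therefore suffices to show that for each $\epsilon>0$ there is $\eta>0$ such that, with $h=f_\epsilon(a)$ a continuous cutoff,
$$
h\,(sb's)\,h\geq\eta\, h^2 .
$$
This inequality gives $(a-\epsilon)_+\precsim x^*x\precsim b'$.

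\emph{Step 4 (main obstacle).} The key estimate is a lower bound on a compressed operator-valued free Poisson element: $p\,sb's\,p$ should be bounded below on the range of $p$ whenever $\widetilde\phi(p)\leq\delta'\widetilde\phi(b')$ with $\delta'<1$ and $\widetilde\phi(b')$ invertible. In the scalar case this is the familiar bound $(\sqrt{\phi(b')}-\sqrt{\phi(p)})^2>0$ on the lower edge of the spectrum. To avoid the bidual, I would work with positive contractions $h\in C^*(a)$, using Remark~\ref{rmk: nobidual-comparison}. I would first try a direct Haagerup--Thorbjørnsen-type linearization argument: compute the $A$-valued Cauchy transform of $h\,sb's\,h$ via subordination, and show that it extends analytically across a neighborhood of $[0,\eta)$ on $hCh$. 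The strict inequality $\delta'<1$ together with the invertibility of $\widetilde\phi(b')$ should provide exactly the margin needed for this extension. If that route proves unwieldy, the fallback is an iterative scheme. Using Lemma~\ref{lem:theta}, replace $\phi$ by $\theta(y)=\phi(b\phi(y)b)$, and realize $a$ under a sum $\sum_i y_i^* b y_i$ with $y_i=s_i c_i$ built from many free semicirculars. The averaging over free copies then concentrates $\sum_i y_i^*by_i$ near its expectation. This is the step I expect to require the most work.
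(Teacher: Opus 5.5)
\textbf{There is a genuine gap in Step 4.} Your Steps 1--3 match the paper: the reduction to $k=1$, Lemma~\ref{lem:preprocess}, and the reduction to a lower bound $h\,s\,b'\,s\,h\geq\eta h^2$ for compactly supported positive contractions $h\in C^*(a)$. But Step 4 carries the entire content of the theorem, and you leave it as a program. Worse, the estimate you propose there is false as you state it. So no argument that uses only those hypotheses can establish it, whether by Cauchy transform, subordination or otherwise.

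Write $s=s_{\widetilde\phi}=\ell+\ell^*$ and $c=\widetilde\phi(b')$. The creation part contributes $hch$. The annihilation part is controlled by $b'^{1/2}\widetilde\phi(h'^2)b'^{1/2}\leq\delta' b'^{1/2}cb'^{1/2}$. These two terms see $c$ in different places (where $h$ sits versus where $b'$ sits), so $\delta'<1$ gives no margin once $c$ is not scalar. In the scalar case the renormalization below is just a rescaling of $s$, which is why the bound you quote holds there.

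Here is a concrete counterexample to the estimate as formulated. Let $D=\C^3$ with minimal projections $p_1,p_2,p_3$, and let $\psi(p_j)=\sum_i m_{ij}p_i$, where the rows of $(m_{ij})$ are $(0,0.01,0.99)$, $(0.45,0.55,0)$, $(0,1,0)$. Then $\psi$ is ucp, $\psi(p_2)$ is invertible, and $\psi(p_1)=0.45\,p_2\leq0.9\,\psi(p_2)$. In the Fock module put $\zeta_0=p_1$ and $\zeta_{k+1}=p_1\xi_\psi p_2\xi_\psi\zeta_k$. Then $\|\zeta_k\|^2=(0.0045)^k$, and $p_2s_\psi\zeta_k=p_2\xi_\psi\zeta_k+0.45\,p_2\xi_\psi\zeta_{k-1}$ (the second term is absent for $k=0$). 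With $\mu=-1/0.45$, the series $\zeta=\sum_k\mu^k\zeta_k$ converges, since $\mu^2\cdot0.0045=1/45$. It satisfies $p_1\zeta=\zeta\neq0$, and the partial sums telescope to give $p_2s_\psi\zeta=0$. Hence $p_1s_\psi p_2s_\psi p_1$ has nonzero kernel on the range of $p_1$.

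\textbf{The missing idea is to renormalize the covariance, not the element.} Set $\theta(x)=\widetilde\phi(c^{-1/2}\widetilde\phi(x)c^{-1/2})$ on $C$. It is selfless by Lemma~\ref{lem:theta} applied to $(C,\widetilde\phi)$ and the positive element $c^{-1/2}$. It satisfies $\theta(b')=\widetilde\phi(1)=1$ and $\theta(h)\leq\gamma\delta$ for every positive contraction $h\in C^*(a)$. Conjugating $b'$ by $c^{-1/2}$ would not achieve this, because $\widetilde\phi$ is not $A$-bimodular; this is why $\widetilde\phi$ is applied twice.

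In $\cS_\theta$ write $b'^{1/2}s_\theta h=u+v$ with $u=b'^{1/2}\ell_\theta h$ and $v=b'^{1/2}\ell_\theta^*h$. Then $u^*u=h\theta(b')h=h^2$ exactly. Choosing $h'\in C^*(a)$ with $h'h=h$ and setting $v'=b'^{1/2}\ell_\theta^*h'$, one gets $v'v'^*=b'^{1/2}\theta(h'^2)b'^{1/2}\leq\gamma\delta$, hence $v^*v\leq\gamma\delta h^2$. The elementary inequality $(u+v)^*(u+v)\geq(1-t)u^*u-(t^{-1}-1)v^*v$ with $t=\sqrt{\gamma\delta}$ gives $hs_\theta b's_\theta h\geq(1-\sqrt{\gamma\delta})^2h^2$. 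This yields $a\precsim b'$ in $\cS_\theta$, and selflessness of $\theta$ (not of $\widetilde\phi$) transports it into $C^{\cU}$, after which your endgame applies. Your fallback does invoke Lemma~\ref{lem:theta}, but with the given $b$ in place of $c^{-1/2}$, and that does not produce this normalization.
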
	

\begin{proof}
Since weak selflessness passes to matrix amplifications, we may assume without loss of generality that $k=1$. So let $a,b\in A$ be positive contractions such that $d_\phi(a)\leq \delta \phi(b)$, where $\delta<1$.

Choose $\gamma>1$ such that $\gamma\delta<1$. Apply the previous lemma
to obtain a positive contraction $b'\in C$ such that $\phi(b)\leq \gamma\widetilde\phi(b')$ and $\widetilde \phi(b')$ is invertible. 
Set $c=\widetilde\phi(b')$ and define  $\theta\colon C\to C$ by 
$$
\theta(x)=\widetilde\phi(c^{-\frac12}\widetilde \phi(x)c^{-\frac12})
$$
Observe that $\theta(b')=1$ and $\theta(h)\leq \gamma\delta$ for any positive contraction $h\in C^*(a)$. By Lemma \ref{lem:theta},
$\theta$ is selfless. Working in $\cT_{\theta}\supset C$, let
$\ell_\theta\in \cT_\theta$ and $s_{\theta}\in \cS_\theta$ denote the canonical left creation and semicircular elements.

Let $h\in C^*(a)$ be a positive contraction with compact support
in $(0,\|a\|]$, and choose a positive contraction $h'\in C^*(a)$ such that $h'h=h$. Then
$$
b'^{\frac12}s_{\theta} h = b'^{\frac12}\ell_{\theta} h + b'^{\frac12}\ell_{\theta}^* h.
$$
We apply the following elementary inequality (see \cite{haag}):
$$
(u+v)^*(u+v) \geq (1-t)u^*u - (t^{-1}- 1)v^*v
$$
for all $0<t<1$. Choose $u=b'^{\frac12}\ell_{\theta} h$ and $v=b'^{\frac12}\ell_{\theta}^* h$ and let us examine the terms $u^*u$ and $v^*v$. 
We have
$$
u^*u = h\ell_{\theta}^*b'\ell_{\theta}h=h\theta(b')h=h^2. 
$$
Now let $v'=b'^{\frac12}\ell_{\theta}^* h'$. Then
$$
v'(v')^*
=b'^{\frac12}\ell_\theta^*(h')^2\ell_\theta b'^{\frac12}
=b'^{\frac12}\theta((h')^2)b'^{\frac12}
\leq\delta\gamma b'\leq\delta\gamma 1.
$$
It follows that $(v')^*v'\leq\delta\gamma 1$. Multiplying by $h$ on the left
and on the right and using that $v'h=v$, we get that 
$$
v^*v= h(v')^*v'h\leq \delta\gamma h^2.
$$
We thus have that
$$
hs_\theta b' s_{\theta} h \geq (1-t)h^2 - (t^{-1}-1)\delta\gamma h^2=(1-t - (t^{-1}-1)\delta\gamma) h^2.
$$
The choice $t=\sqrt{\delta\gamma}$ yields 
$$
hs_\theta b' s_{\theta} h\geq (1-\sqrt{\delta\gamma})^2h^2.
$$
Multiplying by $a^{\frac12}$ on the left and on the right and letting $h$
range through all positive contractions in $C^*(a)$ with compact support yields
$$
a^{\frac12}s_\theta b' s_{\theta} a^{\frac12}\geq (1-\sqrt{\delta\gamma})^2a.
$$
It follows that $a\precsim  b'$ in $\cS_\theta$.
Selflessness of $\theta$ gives this comparison in an ultrapower of
$C$, and passing to coordinates, we obtain $a\precsim b'\sim b$
in $C$. Weak selflessness of $\phi$
supplies a *-homomorphism $C\to A^{\cU}$ fixing  $A$. Applying it and again
passing to coordinates we get $a\precsim b$, as desired.

The last assertion of the theorem follows from Lemma \ref{lem:strictphiversion}.
\end{proof}

Let us continue to denote by $(A,\phi)$ a weakly selfless pair
such that $\phi$ is a ucp map. In order to obtain strict $\phi$-comparison between non-full elements, we shall assume that $\phi\colon A\to A$ preserves approximate units of closed two-sided ideals of $A$. That is to say,  whenever $I\subseteq A$ is a closed two-sided ideal and 
$(e_\lambda)_\lambda$ an approximate unit of $I$, $(\phi(e_\lambda))_\lambda$ is also an approximate unit of $I$.

\begin{lemma}\label{lem:preprocess-local}
Let $(A,\phi)$ be as in the preceding paragraph. 
Let $h,b\in A$ be positive contractions such that $h\in\operatorname{Ideal}(b)$. Given $\gamma>1$ and $\epsilon>0$, there exist a positive contraction 
$b'\in C$ and $\eta>0$ 
such that $b'\sim b$ and, writing $q=\widetilde\phi(b')$, we have
$\phi(b)\leq \gamma q$ and 
\begin{equation}\label{hphih}
\|h\phi(q(q+\eta 1)^{-1})h - h^2\|\leq \epsilon.
\end{equation}
\end{lemma}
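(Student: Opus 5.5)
Let $I=\operatorname{Ideal}(b)$. We mimic Lemma~\ref{lem:preprocess}, replacing the identity $1=\sum x_i^*bx_i$ by an approximate version inside $I$. Since $h\in I$ and elements of the form $\sum_{i=1}^n x_i^*bx_i$ ($x_i\in A$) are dense in $I_+$ in the appropriate sense, we can find an approximate unit of $I$ consisting of such elements. More precisely, there are positive elements $e=\sum_{i=1}^n x_i^*bx_i\in I$ with $\|e\|\le 1$ which can be taken as close as we like, along an approximate unit of $I$, to any element of a given approximate unit. Take $x=\sum_{i=1}^n x_is_i\in C$, so that $E(x)=0$ and $E(x^*bx)=\sum_i\phi(x_i^*bx_i)=\phi(e)$.

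Choose $t>0$ small with $(1+t\|x\|)^2\le\gamma$ and $t\|x\|<1$. Put $z=1+tx$ and $b'=\gamma^{-1}z^*bz$. As before, $z$ is invertible, so $b'\sim b$ and $b'$ is a positive contraction. Expanding and using $E(x)=0$, $E(x^*b)=E(bx)=0$ (these are first-order words in a single $s_i$), we get
$$
\gamma q=\phi(b)+t^2\phi(\phi(e)),
$$
since $\widetilde\phi=\phi\circ E$. Hence $\phi(b)\le\gamma q$. Moreover $q\ge \gamma^{-1}t^2\phi^2(e)$, where $\phi^2(e)=\phi(\phi(e))$.

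For \eqref{hphih}, note that for $\eta>0$ small, $q(q+\eta)^{-1}$ is a positive contraction. The function $f_\eta(s)=s/(s+\eta)$ satisfies $f_\eta(q)\ge f_\eta(\gamma^{-1}t^2\phi^2(e))$ only after using operator monotonicity of $s\mapsto s/(s+\eta)=1-\eta(s+\eta)^{-1}$, which holds. So
$$
h^2\ge h\,\phi(f_\eta(q))\,h\ge h\,\phi\bigl(f_\eta(\gamma^{-1}t^2\phi^2(e))\bigr)\,h,
$$
where the first inequality uses $\|h\|\le1$ and $\phi$ unital. It therefore suffices to make the right-hand side close to $h^2$. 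Here the hypothesis on approximate units enters, and this is the main obstacle: one must order the choices carefully because $t$ depends on $\|x\|$, which depends on $e$.

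Our plan is as follows. Fix an approximate unit $(e_\lambda)$ of $I$. By the hypothesis applied twice, $(\phi^2(e_\lambda))_\lambda$ is again an approximate unit of $I$, and it lies in $I$ since $\phi(I)\subseteq I$ follows from the hypothesis. Hence $\|h\phi^2(e_\lambda)-h\|\to0$. Pick $\lambda$ and an $e=\sum x_i^*bx_i$ close to $e_\lambda$ so that $\|h(1-\phi^2(e))h\|<\epsilon/3$; this is possible by continuity of $\phi$. This fixes $x$ and therefore $t$, and we set $r=\gamma^{-1}t^2\phi^2(e)\in I_+$. Now $f_\eta(r)$ is an increasing net of positive contractions, as $\eta\downarrow0$, converging to the range projection of $r$ in $A^{**}$. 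In particular $f_\eta(r)\,r\to r$ in norm, and $1-f_\eta(r)\le \delta^{-1}(\delta-r)_+$-type estimates give $\|(1-f_\eta(r))^{1/2}r^{1/2}\|\to0$.

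Consider the net $u_\eta=f_\eta(r)$. It is an approximate unit for the hereditary subalgebra generated by $r$, and hence approximately dominates $\phi^2(e)$. We then apply $\phi$ once more and use $\phi(u_\eta)\gtrsim \phi(\phi^2(e))$? This would require yet another iterate. To avoid that, we arrange the construction from the start so that the element whose image under $\phi$ we compare is the right one. Concretely, we choose $e$ so that $h\phi(g)h\approx h^2$ for $g=\phi^2(e)$, using the hypothesis three times. Since $\|\phi(u_\eta)-\phi(u_\eta g)\|$ is controlled by $\|u_\eta g-g\|\to0$ and $u_\eta\ge u_\eta^{1/2}gu_\eta^{1/2}$, we obtain $h\phi(u_\eta)h\ge h\phi(g)h-\epsilon/3$ for small $\eta$. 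Combining gives $\|h\phi(f_\eta(q))h-h^2\|\le\epsilon$. The delicate points are verifying that $\phi(I)\subseteq I$ and arranging the density step so that $e$ has the form $\sum x_i^*bx_i$ while staying close to an approximate unit element.
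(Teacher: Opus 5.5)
Your final plan, once the ``applied twice'' false start is dropped, is essentially the paper's proof. It uses the same $z=1+tx$ construction giving $\gamma q=\phi(b)+t^2\phi^2(e)$, operator monotonicity of $s\mapsto s/(s+\eta)$, and the hypothesis applied three times to make $\|h(1-\phi^3(e))h\|$ small. The paper differs only cosmetically: it takes $e_\lambda$ in the Pedersen ideal of $I$ to get $e_\lambda=\sum x_i^*bx_i$ exactly, and it closes with the scalar bound $\frac{\kappa s}{\kappa s+\eta}\ge\frac{\kappa}{\kappa+\eta}s$ on $[0,1]$.

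One small correction: $\|\phi(u_\eta)-\phi(u_\eta g)\|$ need not be small, so that step should instead read $\phi(u_\eta)\ge\phi(u_\eta g)\ge\phi(g)-\|u_\eta g-g\|\,1$. This is exactly what your two stated ingredients, $u_\eta\ge u_\eta^{1/2}gu_\eta^{1/2}$ and $u_\eta g\to g$, give.
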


\begin{proof}
Let $I=\operatorname{Ideal}(b)$ and let
$(e_\lambda)_\lambda$ be an approximate unit of $I$
consisting of positive contractions in its Pedersen ideal.
By assumption, $(\phi^3(e_\lambda))_\lambda$
is an approximate unit of $I$. Choose $\lambda$ large
enough that
$$
\|h(1-\phi^3(e_\lambda))h\|<\frac{\epsilon}{2}.
$$
Since $e_\lambda$ is in the Pedersen ideal of $I$,
there exist $x_1,\ldots,x_n\in A$ such that
$$
e_\lambda=\sum_{i=1}^n x_i^*bx_i.
$$
Let $s_1,\ldots,s_n\in C$ be freely independent
semicircular elements with covariance $\phi$.
Set
$$
x=\sum_{i=1}^n x_is_i.
$$
Notice that $E(x)=0$ and
$E(x^*bx)=\phi(e_\lambda)$.
Choose $t>0$ such that $t\|x\|<1$ and
$(1+t\|x\|)^2<\gamma$. Now set
$$
z=1+tx,\qquad
b'=\frac{1}{\gamma}z^*bz,\qquad
\kappa=\frac{t^2}{\gamma}.
$$
Then $z$ is invertible and $b'$ is a positive contraction
such that $b'\sim b$. Also,
$$
\gamma\widetilde\phi(b')
=(\phi\circ E)(z^*bz)
=\phi(b)+t^2\phi^2(e_\lambda).
$$
Hence, $\gamma\widetilde\phi(b')\geq\phi(b)$.

Let us show that \eqref{hphih} holds for small enough
$\eta>0$. Set $q=\widetilde\phi(b')$.
Since $q\geq\kappa\phi^2(e_\lambda)$,
$0\leq\phi^2(e_\lambda)\leq1$, and
$r\mapsto r/(r+\eta)$ is operator monotone,
$$
\frac{q}{q+\eta1}
\geq
\frac{\kappa\phi^2(e_\lambda)}
{\kappa\phi^2(e_\lambda)+\eta1}
\geq
\frac{\kappa}{\kappa+\eta}\phi^2(e_\lambda).
$$
Applying $\phi$ on both sides and multiplying by $h$
on the left and on the right, we get
$$
h\phi\left(\frac{q}{q+\eta1}\right)h
\geq
\frac{\kappa}{\kappa+\eta}h\phi^3(e_\lambda)h.
$$
Since the left-hand side is dominated by $h^2$,
it follows that
$$
0\leq h^2-h\phi\left(\frac{q}{q+\eta1}\right)h
\leq h(1-\phi^3(e_\lambda))h
+\frac{\eta}{\kappa+\eta}h^2.
$$
Choosing $\eta>0$ such that
$\eta/(\kappa+\eta)<\epsilon/2$, and using the
choice of $\lambda$ and $\|h\|\leq1$, we obtain
\eqref{hphih}.
\end{proof}

\begin{theorem}\label{thm:local-covariance-comparison}
Suppose that $(A,\phi)$ is weakly selfless and that $\phi$ preserves
approximate units of ideals of $A$. Let $0\leq \delta <1$ and 
let $a,b\in M_k(A)$ be positive contractions. 
Then
$$
\begin{gathered}
d_{\phi^{(k)}}(a)
\leq \delta\phi^{(k)}(b)\\[2pt]
a\in\operatorname{Ideal}(b)
\end{gathered}
\quad\Longrightarrow\quad
a\precsim b.
$$
Consequently, $(A,\phi)$ has strict $\phi$-comparison. 
\end{theorem}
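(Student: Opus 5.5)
Our plan is to follow the proof of Theorem~\ref{thm:weakly-strictphi}, using Lemma~\ref{lem:preprocess-local} in place of Lemma~\ref{lem:preprocess}. Since $q=\widetilde\phi(b')$ need no longer be invertible, we work with the regularized inverse $(q+\eta1)^{-1}$ and accept an error controlled by \eqref{hphih}.

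First we reduce to $k=1$. Weak selflessness passes to matrix amplifications. Preservation of approximate units of ideals should pass from $\phi$ to $\phi^{(k)}$, because ideals of $M_k(A)$ are of the form $M_k(I)$; this needs a short check. Next we reduce to the case where $a$ is replaced by a positive contraction $h\in C^*(a)$ with compact support in $(0,\|a\|]$. It suffices to show $(a-\epsilon')_+\precsim b$ for every $\epsilon'>0$, and such an $h$ with $hh'=h$ and $(a-\epsilon')_+\precsim h$ lies in $\operatorname{Ideal}(b)$, since $a$ does. Fix such $h$, $h'$ and $\gamma>1$ with $\gamma\delta<1$. Apply Lemma~\ref{lem:preprocess-local} to $h'$ (and $b$) with a small $\epsilon$. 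This gives $b'\sim b$ in $C$ and $\eta>0$ with $\phi(b)\le\gamma q$ and $\|h'\phi(q(q+\eta)^{-1})h'-h'^2\|\le\epsilon$.

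Define
$$
\theta(x)=\widetilde\phi\bigl((q+\eta)^{-\frac12}\widetilde\phi(x)(q+\eta)^{-\frac12}\bigr),
$$
which is selfless on $C$ by Lemma~\ref{lem:theta}, applied to the selfless map $\widetilde\phi$ of Lemma~\ref{lem:free-dilation}. Then $\theta(b')=\phi(q(q+\eta)^{-1})$. For a positive contraction $g\in C^*(a)$ we have $\widetilde\phi(g)=\phi(g)\le\delta\phi(b)\le\delta\gamma q$, hence $\theta(g)\le\delta\gamma\,\phi(q(q+\eta)^{-1})\le\delta\gamma$. Now run the same estimate with $u=b'^{1/2}\ell_\theta h$ and $v=b'^{1/2}\ell_\theta^*h$. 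We get
$$
u^*u=h\theta(b')h=h\,h'\theta(b')h'\,h\ge h(h'^2-\epsilon)h=h^2-\epsilon h^2 .
$$
As before, $v^*v\le\delta\gamma h^2$. This yields
$$
hs_\theta b's_\theta h\ge\bigl((1-t)(1-\epsilon)-(t^{-1}-1)\delta\gamma\bigr)h^2,
$$
and the coefficient is positive for suitable $t$ once $\epsilon$ is small relative to $1-\delta\gamma$. Hence $h\precsim b'$ in $\cS_\theta$. Selflessness of $\theta$ and the weak selflessness map $C\to A^{\cU}$ then give $h\precsim b$ in $A$, exactly as in Theorem~\ref{thm:weakly-strictphi}. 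The final statement follows from Lemma~\ref{lem:strictphiversion}.

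We expect the main obstacle to be the bookkeeping in the second step. Lemma~\ref{lem:preprocess-local} controls $\theta(b')$ only after compressing by $h'$, so the estimate must be made at the level of $h$ (with $h=hh'$) rather than for $a$ directly. We then pass to $a$ through the cutoffs $(a-\epsilon')_+$, using that $\precsim$ is closed under such limits. A second point to check is that the bound $\theta(g)\le\delta\gamma$ holds uniformly and does not depend on $\eta$; this rests on operator monotonicity, via $(q+\eta)^{-1/2}q(q+\eta)^{-1/2}\le1$.
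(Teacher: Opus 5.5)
Your proposal is correct and follows the paper's proof essentially step for step. It uses the same regularized map $\theta(x)=\widetilde\phi\bigl((q+\eta1)^{-1/2}\widetilde\phi(x)(q+\eta1)^{-1/2}\bigr)$, the same application of Lemma~\ref{lem:preprocess-local} to $h'$, the same lower bound $u^*u\geq(1-\epsilon)h^2$ obtained by compressing with $h=hh'$, the same bound $v^*v\leq\gamma\delta h^2$, and the same transfer through $\cS_\theta$, an ultrapower of $C$, and $A^{\cU}$. Your $1-\epsilon>\gamma\delta$ plays the role of the paper's $\alpha$, and leaving $t\in(0,1)$ merely ``suitable'' also covers the case $\delta=0$, which the paper treats separately.
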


\begin{proof}
Both weak selflessness and the property of preservation of approximate units
pass from $(A,\phi)$ to $(M_k(A),\phi^{(k)})$. Let us thus assume without loss of generality that $k=1$.

Let $a,b\in A$ be positive contractions as in the theorem. 
Choose positive contractions $h,h'\in C^*(a)$
with compact support such that $h'h=h$.

We will show that $h^2\precsim b$.
Choose $\gamma>1$ and $\alpha$ such that
$\gamma\delta<\alpha<1$.

Apply Lemma~\ref{lem:preprocess-local} with $h'$
in place of $h$ and $\epsilon=1-\alpha$,
obtaining $b'\in C$ and $\eta>0$.
Set $q=\widetilde\phi(b')\in A$,
$c=(q+\eta1)^{-1/2}$, and define a cp map
$\theta\colon C\to C$ by
$$
\theta(y)=\widetilde\phi(c\widetilde\phi(y)c),
\qquad y\in C.
$$
We have
$$
0\leq\theta(b')
=\phi\left(\frac{q}{q+\eta1}\right)\leq1.
$$
The estimate \eqref{hphih} and the relation $h'h=h$
give
\begin{equation}\label{eq:theta-local-lower}
	\begin{aligned}
		h\theta(b')h
		&=h^2-h\bigl[h'(1-\theta(b'))h'\bigr]h\\
		&\geq\alpha h^2.
	\end{aligned}
\end{equation}
On the other hand, the hypothesis and conclusion of 
Lemma~\ref{lem:preprocess-local} give
$$
\phi((h')^2)\leq\delta\phi(b)
\leq\gamma\delta q.
$$
Conjugating by $c$ and applying $\widetilde\phi$,
we obtain
\begin{equation}\label{eq:theta-local-upper}
	\theta((h')^2)
	\leq\gamma\delta\theta(b')
	\leq\gamma\delta1.
\end{equation}

Since $\widetilde\phi$ is selfless and
$c\in A$ is positive,
Lemma~\ref{lem:theta} shows that $\theta$ is selfless.
Working in $\cT_\theta\supset C$, let
$\ell_\theta\in\cT_\theta$ and
$s_\theta=\ell_\theta+\ell_\theta^*\in\cS_\theta$
be the creation and semicircular operators over $C$,
and write
$$
b'^{1/2}s_\theta h
=b'^{1/2}\ell_\theta h
+b'^{1/2}\ell_\theta^*h
=:u+v.
$$
By \eqref{eq:theta-local-lower},
$$
u^*u=h\theta(b')h\geq\alpha h^2.
$$
Also, setting $v'=b'^{1/2}\ell_\theta^*h'$
and using \eqref{eq:theta-local-upper}, we have
$$
v'(v')^*
=b'^{1/2}\theta((h')^2)b'^{1/2}
\leq\gamma\delta1.
$$
Hence $(v')^*v'\leq\gamma\delta1$.
Since $v=v'h$, compressing by $h$ gives
$$
v^*v\leq\gamma\delta h^2.
$$

If $\delta>0$, we apply the inequality
$$
(u+v)^*(u+v)
\geq(1-t)u^*u-(t^{-1}-1)v^*v
$$
with
$$
t=\sqrt{\frac{\gamma\delta}{\alpha}}\in(0,1).
$$
This yields
$$
hs_\theta b's_\theta h
\geq
\bigl((1-t)\alpha-(t^{-1}-1)\gamma\delta\bigr)h^2
=(\sqrt{\alpha}-\sqrt{\gamma\delta})^2h^2.
$$
If $\delta=0$, the same estimate follows directly
from $v=0$.

Since $\alpha>\gamma\delta$, it follows that
$$
h^2\precsim hs_\theta b's_\theta h\precsim b'
$$
in $\cS_\theta$.
Selflessness of $\theta$ gives this comparison
in an ultrapower of $C$, and passing to coordinates,
we obtain $h^2\precsim b'\sim b$ in $C$.
Weak selflessness of $\phi$ supplies a
$*$-homomorphism $C\to A^{\cU}$ fixing $A$.
Applying it and passing to coordinates once more
gives $h^2\precsim b$ in $A$.
Letting $h$ range through all compactly supported
positive contractions in $C^*(a)$ yields
$a\precsim b$.

The last assertion of the theorem follows from Lemma \ref{lem:strictphiversion}.
\end{proof}

\section{From strict $\phi$-comparison to strict comparison by traces}

Here we show that if $(A,\phi)$ has strict $\phi$-comparison, then $A$ has strict comparison by traces.
We show this by proving that  $\Cu(A)$ is almost unperforated and that 
l.s.c. $[0,\infty]$-valued 2-quasitraces on $A$ are traces. We obtain
versions of these facts assuming strict $\phi$-comparison of full elements.
The latter results are somewhat less technical but follow by very similar arguments.

Let $\phi\colon A\to A$ be a ucp map.

\begin{lemma}\label{lem:rank-compression}
Let $a,b\in A$ be positive elements that 
satisfy $k[a]\leq l[b]$ in $\Cu(A)$, for $k,l\in \N$.
Then for every $\epsilon>0$ there exist positive contractions
$c\in M_{kl}(A)$ and $e\in C^*(b)$ such that
$[(a-\epsilon)_+]=[c]$, $[b]=[e]$, and
$$
d_{\phi^{(kl)}}(c)
\leq\frac{l}{k}\phi^{(kl)}(e\oplus0_{kl-1}).
$$
\end{lemma}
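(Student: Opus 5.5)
The plan is to turn the Cuntz inequality $k[a]\leq l[b]$ into an explicit factorization by matrices over $A$, and then read off the $\phi$-rank estimate from that factorization, using the normalizations of $\phi^{(k)}$ and $\phi^{(kl)}$. Note first that
$$
\tfrac{l}{k}\,\phi^{(kl)}(e\oplus 0_{kl-1})=\tfrac{1}{k^2}\,\phi(e),
$$
so the target estimate is $d_{\phi^{(kl)}}(c)\leq \phi(e)/k^2$. By Remark~\ref{rmk: nobidual-comparison} this means $\phi^{(kl)}(g)\leq \phi(e)/k^2$ for every positive contraction $g\in C^*(c)$.

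\textbf{Step 1 (choice of $e$).} Fix $\epsilon>0$. Since $(a-\epsilon)_+\otimes 1_k\precsim b\otimes 1_l$, Rørdam's lemma gives $\epsilon'>0$ and a matrix $x\in M_{l,k}(A)$ with
$$
(a-\epsilon)_+\otimes 1_k=x^*\bigl((b-\epsilon')_+\otimes 1_l\bigr)x .
$$
Take $e=f(b)\in C^*(b)$, where $f$ is a continuous function with $0\leq f\leq 1$, $f=0$ only at $0$, and $f=1$ on $[\epsilon',\infty)$. Then $e$ is a positive contraction with $[e]=[b]$, and $e$ acts as a unit on $(b-\epsilon')_+$. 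So $(e\otimes 1_l)\,y=y$ for $y=((b-\epsilon')_+^{1/2}\otimes 1_l)\,x$.

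\textbf{Step 2 (a rotated copy of $(a-\epsilon)_+$).} Regard $yy^*$ as sitting in the hereditary subalgebra generated by $e\otimes1_l$. We have $yy^*\sim y^*y=(a-\epsilon)_+\otimes 1_k$, which splits as $k$ mutually orthogonal pieces $w_1,\dots,w_k$. Here $w_j=y p_j y^*$, with $p_j$ the $j$-th diagonal unit of $M_k$, and each $w_j\sim(a-\epsilon)_+$. Any positive contraction $g$ in the hereditary algebra of $e\otimes1_l$ satisfies $g\leq e\otimes 1_l$ up to the support, in the sense of Remark~\ref{rmk: nobidual-comparison}. Applying the normalized map to such $g$, the total $\phi$-rank of $w_1+\dots+w_k$ is therefore at most $\tfrac{1}{l}\sum_{i\le l}\phi(e)=\phi(e)$, measured by $\phi^{(l)}$.

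\textbf{Step 3 (isolating one copy with a $1/k$ share).} Form
$$
c=\sum_{j=1}^k u_j\, w_j\, u_j^*\in M_{kl}(A),
$$
where the $u_j$ are permutation partial isometries placing the $k$ copies in $k$ disjoint groups of $l$ diagonal slots of $M_{kl}$. The cyclic symmetry of $1_k$ should make the $\phi^{(kl)}$-mass of any $g\in C^*(c)$ at most $\tfrac{1}{k}$ of the total in Step 2. Combining with the extra factor $1/k$ from renormalizing $\phi^{(l)}$ to $\phi^{(kl)}$ would give $d_{\phi^{(kl)}}(c)\leq \phi(e)/k^2$. Concretely, one uses that $\phi^{(kl)}$ of a positive element equals $\tfrac1k$ times the average over the $k$ groups of the $\phi^{(l)}$-values.

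\textbf{Main obstacle.} This is Step 3: we need $[c]=[(a-\epsilon)_+]$, that is, a single copy, while still getting the factor $1/k$. The arrangement above a priori gives $c\sim (a-\epsilon)_+\otimes 1_k$, which is too large. To fix this, I would replace the sum by a single ``spread'' element $c=v(a-\epsilon)_+v^*$, with $v$ the column
$$
v=k^{-1/2}\bigl(u_1y p_1,\dots,u_ky p_k\bigr)^{T},
$$
after identifying $p_j$ with a copy of $1$ via the symmetry of $1_k$. Then $c\sim(a-\epsilon)_+$, and the diagonal blocks of any $g\in C^*(c)$ are each bounded by $\tfrac1k$ times a compression of $e\otimes 1_l$. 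Checking this bound through functional calculus, using the characterization in Remark~\ref{rmk: nobidual-comparison} rather than the bidual, is the delicate computation. If it fails, the fallback is to prove the weaker form with $c=(a-\epsilon)_+\otimes1_k$ and $l/k$ replaced by $l$, and then adjust the normalization in the subsequent almost-unperforation argument.
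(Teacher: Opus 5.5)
\textbf{There is a gap: the proposal stops at the step that constitutes the proof.} The construction you reach at the end of Step 3 is exactly the paper's. Let $y_1,\dots,y_k\in M_{l\times 1}(A)$ be the columns of $y$; no ``symmetry of $1_k$'' is needed. Then $y_i^*y_j=\delta_{ij}(a-\epsilon)_+$ and $w_j=y_jy_j^*$, and the stacked column $v=k^{-1/2}(y_1,\dots,y_k)^T$ satisfies $v^*v=(a-\epsilon)_+$. However, you leave the decisive estimate unchecked and hedge with a fallback.

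The bound you announce would also not suffice as stated. Since $\phi^{(kl)}(g)=\frac1k\sum_j\phi^{(l)}(g_{jj})$, bounding each block separately by $\frac1k(e\otimes 1_l)$, or anything of that size, only gives $\phi^{(kl)}(g)\le\frac1k\phi(e)$. That is a factor $k$ too weak. You need an exact formula for the diagonal blocks, plus orthogonality across $j$, so that the $k$ blocks together share a single copy of $e\otimes1_l$.

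\textbf{The missing computation.} Take $c=vv^*$; your $v(a-\epsilon)_+v^*=(vv^*)^2$ works equally well. Then $c^n=v(v^*v)^{n-1}v^*$, so the $j$-th diagonal $l\times l$ block of $c^n$ is
$$\tfrac1k\,y_j(a-\epsilon)_+^{n-1}y_j^*=\tfrac1k\,(y_jy_j^*)^n=\tfrac1k\,w_j^n,$$
using $y_j^*y_j=(a-\epsilon)_+$. By linearity and norm continuity, $f(c)_{jj}=\frac1k f(w_j)$ for every positive contraction $f\in C_0(0,1]$.

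The $f(w_j)$ are pairwise orthogonal positive contractions with $(e\otimes1_l)f(w_j)=f(w_j)$. Hence $\sum_j f(w_j)\le e\otimes1_l$, and
$$\phi^{(kl)}(f(c))=\tfrac1k\,\phi^{(l)}\Bigl(\sum_j f(c)_{jj}\Bigr)=\tfrac1{k^2}\,\phi^{(l)}\Bigl(\sum_j f(w_j)\Bigr)\le\tfrac1{k^2}\,\phi(e).$$
By Remark~\ref{rmk: nobidual-comparison}, this is the claim.

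\textbf{Two smaller corrections.} First, in Step 2 the relevant condition is $(e\otimes1_l)g=g$, not membership of $g$ in the hereditary subalgebra of $e\otimes1_l$. The latter does not imply $g\le e\otimes1_l$. Your choice of $e$ does give the former for $g=f(w_j)$.

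Second, the fallback is not viable. An element of class $k[(a-\epsilon)_+]$, bounded only with factor $l$ instead of $l/k$, carries no information beyond the hypothesis $k[a]\le l[b]$. In the almost-unperforation argument, what matters is a factor strictly less than $1$ (namely $\frac{k}{k+1}$) in front of a single copy of $(a-\epsilon)_+$.
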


\begin{proof}
From $k[a]\leq l[b]$ we obtain
$\eta>0$ and $x\in M_{l\times k}(A)$ such that
$$
(a-\epsilon)_+\otimes1_k
=x^*x,\qquad xx^*\in ((b-\eta)_+\otimes1_l)M_l(A)((b-\eta)_+\otimes1_l).
$$
Write $x=(x_1,\ldots,x_k)$,
where $x_i\in M_{l\times1}(A)$ are the columns of $x$. Then
$$
x_i^*x_j=\delta_{ij}(a-\epsilon)_+.
$$
Set $d_i=x_ix_i^*$ and $e=g(b)$, where
$g(t)=\min\{t/\eta,1\}$. Thus, the $d_i$ are pairwise
orthogonal, $e\sim b$, and $(e\otimes1_l)d_i=d_i$ for all $i$.

Now let
$$
y=\frac1{\sqrt k}
\begin{pmatrix}x_1\\ \vdots\\ x_k\end{pmatrix}
\in M_{kl\times1}(A),
$$
and set $c=yy^*$. Let us show that $c,e$ are as required. 
Since $y^*y=(a-\epsilon)_+$,  $c$ is a positive
contraction such that $[c]=[(a-\epsilon)_+]$.

For each $n\geq 1$ and $i=1,\ldots,k$ we have 
$$
(c^n)_{ii}= \frac1k x_i(a-\epsilon)_+^{n-1}x_i^*=\frac1k d_i^n,
$$
where the left-hand side is the $i$-th $l\times l$ diagonal block
of $c^n$.
Taking linear combinations and passing to limits, we deduce that
for every positive contraction $f\in C_0(0,1]$, the diagonal
$l\times l$ blocks of $f(c)$ agree with $k^{-1}f(d_i)$.
Since the elements $f(d_i)$ are pairwise orthogonal positive contractions
and $(e\otimes1_l)f(d_i)=f(d_i)$, we have
$$
\sum_{i=1}^k (f(c))_{ii}
=\frac1k\sum_{i=1}^k f(d_i)
\leq\frac1k(e\otimes1_l).
$$
Consequently,
$$
\begin{aligned}
\phi^{(kl)}(f(c))
&=\frac1k\phi^{(l)}
  \left(\sum_{i=1}^k f(c)_{ii}\right)\\
&=\frac1{k^2}\phi^{(l)}
  \left(\sum_{i=1}^k f(d_i)\right)\\
&\leq\frac1{k^2}\phi^{(l)}(e\otimes1_l)=\frac1{k^2}\phi(e)
=\frac{l}{k}\phi^{(kl)}(e\oplus0_{kl-1}),
\end{aligned}
$$
for any positive contraction $f\in C_0(0,1]$.
The inequality of the lemma now follows from Remark \ref{rmk: nobidual-comparison}.
\end{proof}

\begin{theorem}\label{thm:almost-unperforated}
Let $(A,\phi)$ be a pair of a unital C*-algebra 
and a ucp map $\phi\colon A\to A$.
\begin{enumerate}[(i)]
\item 
 If $(A,\phi)$ has strict $\phi$-comparison, then $\Cu(A)$ is almost unperforated.
 \item 
If $(A,\phi)$ has strict $\phi$-comparison of full elements, then $\Cu(A)$ is almost unperforated on full elements (see Subsection~\ref{subsec:pure-cstar-algebras}).	
\end{enumerate}
\end{theorem}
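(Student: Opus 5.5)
We prove (i); the argument for (i) specializes to (ii) once we note that fullness of $b$ is preserved at each step. Suppose $(k+1)[a]\leq k[b]$ in $\Cu(A)$. Since $\Cu(A)$ is generated by elements of $M_\infty(A)_+$ and the question is stable under passing to matrix algebras ($\phi^{(m)}$ inherits the hypotheses on $(M_m(A),\phi^{(m)})$ up to the normalizations), we reduce to $a,b\in M_m(A)_+$. After replacing $(A,\phi)$ by $(M_m(A),\phi^{(m)})$ we may assume $a,b\in A_+$. It suffices to show $(a-\epsilon)_+\precsim b$ for every $\epsilon>0$.

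\emph{Step 1: applying Lemma~\ref{lem:rank-compression}.} With $k+1$ in place of $k$ and $k$ in place of $l$, Lemma~\ref{lem:rank-compression} gives positive contractions $c\in M_{k(k+1)}(A)$ and $e\in C^*(b)$. These satisfy $[c]=[(a-\epsilon)_+]$, $[e]=[b]$, and
$$
d_{\phi^{(k(k+1))}}(c)\leq \tfrac{k}{k+1}\,\phi^{(k(k+1))}(e\oplus 0).
$$
Set $\delta=k/(k+1)<1$ and $N=k(k+1)$.

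\emph{Step 2: the ideal condition.} We need $c\in\operatorname{Ideal}(e\oplus 0)$ in $M_N(A)$. Since $[c]=[(a-\epsilon)_+]\le[a]\le (k+1)[a]\le k[b]$, the element $c$ lies in the ideal of $M_N(A)$ generated by $b$. This ideal equals $M_N(\operatorname{Ideal}(b))$, which is also the ideal generated by $e\oplus 0$, because $e\sim b$ generates the same ideal of $A$. In case (ii), with $b$ full in $A$, the element $e\oplus0$ is full in $M_N(A)$.

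\emph{Step 3: conclusion.} The comparison in Step 1 involves $\phi^{(N)}(e\oplus0)$ rather than $d_{\phi^{(N)}}((\cdot-\epsilon')_+)$. So we apply the reformulation from Lemma~\ref{lem:strictphiversion}, read in the converse direction: strict $\phi$-comparison should imply the implication of that lemma for positive contractions. To get this, fix $\epsilon'>0$ and pass through a cut-down. Since $e=g(b)$ in Lemma~\ref{lem:rank-compression} acts as a unit on the relevant supports, we have
$$
\phi^{(N)}(e\oplus 0)\le d_{\phi^{(N)}}(e\oplus 0).
$$
Moreover $d_{\phi^{(N)}}(e\oplus0)$ can be approximated from below by $d_{\phi^{(N)}}(((e-\epsilon')_+)\oplus0)$ only in a weak sense, and this is the main obstacle. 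I would handle it by slack in $\delta$. Choose $\delta<\delta'<1$ and replace $e$ by $e_1=g_1(b)$, where $g_1$ has slightly larger support, so that $e=e\,(e_1-\epsilon')_+$-type relations hold. Concretely, choose $\epsilon'$ with $g(t)\le \chi_{(\epsilon',\infty)}(g_1(t))$. Then $\phi(e)\le d_\phi((e_1-\epsilon')_+)$ by positivity of $\phi^{**}$, and $e_1\sim b$. This yields
$$
d_{\phi^{(N)}}(c)\le \delta\, d_{\phi^{(N)}}\bigl((e_1\oplus0-\epsilon')_+\bigr),
$$
together with $c\in\operatorname{Ideal}(e_1\oplus 0)$. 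Strict $\phi$-comparison (respectively, comparison of full elements) then gives $c\precsim e_1\oplus 0$, that is, $(a-\epsilon)_+\precsim b$. Letting $\epsilon\to0$ gives $[a]\le[b]$.
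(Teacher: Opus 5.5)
Your reduction and Steps 1--2 are fine, and Step 2 makes explicit the ideal condition that the paper leaves implicit. Step 3, however, has a genuine gap.

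\textbf{The gap.} The pointwise inequality $g(t)\le\chi_{(\epsilon',\infty)}(g_1(t))$ on $\sigma(b)$ cannot be arranged in general.
\begin{enumerate}
\item[(1)] In Lemma~\ref{lem:rank-compression}, $e=g(b)$ with $g(t)=\min\{t/\eta,1\}$, which is strictly positive on all of $(0,\infty)$. So $e$ already has the full support of $b$, and ``slightly larger support'' has no meaning here.
\item[(2)] For $e_1=g_1(b)\in C^*(b)$, the function $g_1$ is continuous with $g_1(0)=0$. Hence $\chi_{(\epsilon',\infty)}(g_1(t))=0<g(t)$ for all sufficiently small $t>0$.
\item[(3)] So the inequality fails whenever $0$ is an accumulation point of $\sigma(b)$, which is the typical case.
\end{enumerate}
You are trying to enlarge the target $e_1\sim b$ so that a cut-down of it dominates $e$. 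That cannot work when $e$ has the full support of $b$; it is $e$ that must be shrunk. Adding slack in $\delta$ does not help either. The best available bound is $g\le\chi_{(\epsilon',\infty)}+\epsilon'/\eta$, which gives only an additive error $(\delta\epsilon'/\eta)\,1$. A multiplicative slack cannot absorb this unless $d_\phi((b-\epsilon')_+)$ is bounded below by a multiple of $1$, which fails in general. The displayed inequality $d_{\phi^{(N)}}(c)\le\delta\, d_{\phi^{(N)}}\bigl((e_1\oplus0-\epsilon')_+\bigr)$ is therefore unjustified.

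\textbf{The fix.} Do the cut-down on $b$ \emph{before} invoking Lemma~\ref{lem:rank-compression}.
\begin{enumerate}
\item[(1)] Since $(k+1)[(a-\epsilon)_+]\ll(k+1)[a]\le k[b]=\sup_{\eta>0}k[(b-\eta)_+]$, there is $\eta>0$ with $(k+1)[(a-\epsilon)_+]\le k[(b-\eta)_+]$.
\item[(2)] Apply the lemma to $(a-\epsilon)_+$ and $(b-\eta)_+$. This gives $c\in M_N(A)$ with $[c]=[(a-2\epsilon)_+]$ and a positive contraction $e\in C^*((b-\eta)_+)$.
\item[(3)] Because $e$ lies in $C^*((b-\eta)_+)$, Remark~\ref{rmk: nobidual-comparison} gives directly $\phi^{(N)}(e\oplus0)\le d_{\phi^{(N)}}((b-\eta)_+\oplus0)=d_{\phi^{(N)}}((b\oplus0-\eta)_+)$.
\item[(4)] This is exactly the hypothesis of strict $\phi$-comparison for the pair $c$, $b\oplus0$, with $\epsilon=\eta$ and $\delta=k/(k+1)$. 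In case (ii), $b\oplus0$ is full when $b$ is.
\item[(5)] You get $(a-2\epsilon)_+\sim c\precsim b\oplus0$, and letting $\epsilon\to0$ gives $[a]\le[b]$.
\end{enumerate}
No auxiliary $e_1$ is needed; this is precisely the paper's argument.
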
		

\begin{proof}
It suffices to prove almost unperforation for Cuntz classes 
of positive elements in $M_n(A)$ for some $n$, as the general case then follows
by standard approximation arguments.
Since strict $\phi$-comparison (of full elements) implies
strict $\phi^{(n)}$-comparison (of full elements) for
$(M_n(A),\phi^{(n)})$, where $\phi^{(n)}$ is viewed as
$M_n(A)$-valued by the diagonal embedding, we may assume
that $n=1$.

Suppose that $a,b\in A_+$ satisfy that
$(k+1)[a]\leq k[b]$ for some $k\in\mathbb N$.
Let $\epsilon>0$ and choose $\eta>0$ such that
$(k+1)[(a-\epsilon)_+]\leq k[(b-\eta)_+]$.
Set $N=k(k+1)$.
Apply the previous lemma to $(a-\epsilon)_+$ and
$(b-\eta)_+$, to obtain
positive contractions $c\in M_N(A)$ and
$e\in C^*((b-\eta)_+)$ such that
$[(a-2\epsilon)_+]=[c]$, $[e]=[(b-\eta)_+]$, and
$$
\begin{aligned}
d_{\phi^{(N)}}(c)
&\leq\frac{k}{k+1}\phi^{(N)}(e\oplus0_{N-1})\\
&\leq\frac{k}{k+1}
d_{\phi^{(N)}}((b-\eta)_+\oplus0_{N-1}).
\end{aligned}
$$
Note that, if $b$ is full, so is   $b\oplus0_{N-1}$. Thus, the strict $\phi$-comparison
assumption from either part (i) or (ii) gives $c\precsim b\oplus0_{N-1}$. Hence,
$$
[(a-2\epsilon)_+]=[c]\leq[b].
$$
Taking the supremum over all $\epsilon>0$, we get
$[a]\leq[b]$, as required.
\end{proof}

Let us continue to denote by $(A,\phi)$ a pair of a unital C*-algebra 
and a ucp map.

For a lower semicontinuous (l.s.c.) $[0,\infty]$-valued 2-quasitrace $\tau$ on $A$, denote by $\tau_m$ the unique 2-quasitrace on $M_m(A)$ agreeing with $\tau$ on the upper left corner (see \cite[Remark 2.27 (viii)]{blanchard-kirchberg}). We write
$$
d_\tau(c)=\lim_{n\to\infty}\tau_m(c^{1/n}),
\qquad c\in M_m(A)_+.
$$
We note that $d_\tau$ induces an order preserving functional on $\Cu(A)$
\cite{black-hand, ERS}.

\begin{lemma}\label{lem:quasitrace-order1}
Let $a,b\in M_m(A)$ be positive and let $\tau$ be a l.s.c.
$[0,\infty]$-valued $2$-quasitrace on $A$. 
\begin{enumerate}[(i)]
\item 
If $A$ has strict $\phi$-comparison and there exists a positive contraction $e\in M_m(A)$ such that
$ea=a$, $eb=b$, and  $\tau_m(e)<\infty$, then
$$
\phi^{(m)}(a)\leq\phi^{(m)}(b)
\quad\Longrightarrow\quad\tau_m(a)\leq\tau_m(b).
$$

\item 
If $A$ has strict $\phi$-comparison of full elements and $\tau$ is bounded, then
$$
\phi^{(m)}(a)\leq\phi^{(m)}(b)
\quad\Longrightarrow\quad\tau_m(a)\leq\tau_m(b).
$$
\end{enumerate}
\end{lemma}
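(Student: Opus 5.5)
The plan is to discretize $a$ and $b$ into layers of spectral cut-downs, place the layers as diagonal blocks of a large matrix, and apply strict $\phi$-comparison (in the form of Definition~\ref{def:strict-phi-comparison}) to these block matrices. The resulting Cuntz subequivalence is then converted into an inequality of $d_\tau$'s, and $\tau_m$ is recovered by a Riemann-sum approximation of the layer-cake formula $\tau_m(x)=\int_0^{\|x\|}d_\tau((x-t)_+)\,dt$ (valid for l.s.c.\ 2-quasitraces). By the usual amplification remark (strict $\phi$-comparison of $(A,\phi)$ passes to $(M_m(A),\phi^{(m)})$), I may take $m=1$. I may also assume $\|a\|,\|b\|\leq 1$ after a common rescaling. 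To create the multiplicative slack $\delta<1$ and to guarantee the ideal condition, I will replace the pair $(a,b)$ by $(\delta a,\ b+\epsilon e)$ with $0<\delta<1$ and $\epsilon>0$. Then $\phi(\delta a)\leq \delta\phi(b+\epsilon e)$. Since $ea=a$, we have $a\in\operatorname{Ideal}(e)\subseteq\operatorname{Ideal}(b+\epsilon e)$. In case (ii) we take $e=1$, so that $b+\epsilon 1$ is invertible and hence full.

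Fix $n\in\N$ and continuous functions $f_i,g_i\colon[0,\infty)\to[0,1]$, $i=1,\ldots,n$, with the following properties. Each $f_i$ vanishes on $[0,i/n]$ and is $1$ beyond $(i+1)/n$. Each $g_i$ is of the form $\min\{(t-(i-1)/n-\eta)_+/\eta,1\}$, so that $g_i(x)\sim (x-(i-1)/n-\eta)_+$ and $g_i\geq\chi_{((i-1)/n+2\eta,\infty)}$. Set
$$
c=\bigoplus_{i=1}^n f_i(\delta a),\qquad c'=\bigoplus_{i=1}^n \bigl(b+\epsilon e-\tfrac{i-1}{n}-\eta\bigr)_+\in M_n(A).
$$
The spectral inequalities $\frac1n\sum_i\chi_{(i/n,\infty)}(t)\leq t$ and $t\leq \frac1n\sum_i\chi_{((i-1)/n+2\eta,\infty)}(t)+2\eta$ yield, via Remark~\ref{rmk: nobidual-comparison},
$$
d_{\phi^{(n)}}(c)\leq\phi(\delta a)\leq\delta\phi(b+\epsilon e).
$$
On the other hand, $\phi(b+\epsilon e)$ is at most $\phi^{(n)}$ of the $g_i$-version of $c'$ plus a correction of size $2\eta\phi(1)$. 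I will choose $\eta$ much smaller than $\epsilon$ and absorb this correction into $\epsilon e$; this absorption is the delicate bookkeeping. If it does not absorb cleanly, I will instead use a slightly smaller $\delta'$ together with the fact that $b+\epsilon e\geq\epsilon e$. The target is $d_{\phi^{(n)}}(c)\leq\delta'\,d_{\phi^{(n)}}((c''-\eta')_+)$ for a suitable $c''\sim c'$. For the ideal condition, I will arrange that the first block of $c'$ has the same ideal as $b+\epsilon e$, which holds once $\eta<\epsilon$ on the support of $e$, and note that $f_i(\delta a)\in\operatorname{Ideal}(a)\subseteq\operatorname{Ideal}(e)$. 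Strict $\phi$-comparison then gives $c\precsim c'$.

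Applying the functional $d_\tau$ (order preserving on $\Cu(A)$) gives
$$
\sum_i d_\tau(f_i(\delta a))\leq\sum_i d_\tau\bigl((b+\epsilon e-\tfrac{i-1}{n}-\eta)_+\bigr).
$$
Dividing by $n$ and using the layer-cake formula, the left side is at least $\tau(\delta a)-\frac{2}{n}d_\tau(e)$, and the right side is at most $\tau(b+\epsilon e)+\frac1n d_\tau(e)$. All these elements lie in the hereditary subalgebra of $e$ (in case (i)), so their ranks are bounded by $d_\tau(e)\leq$ something finite. I expect this finiteness to be the main obstacle: one needs $d_\tau$ of the support finite, not merely $\tau(e)<\infty$. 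I would handle it by first replacing $e$ with $e'=g(e)$ for a cutoff $g$ that is $1$ near $1$, so that $d_\tau$ of the cut-down supports is bounded by $\tau(e)/$(cutoff level) and still $e'a=a$, $e'b=b$. Finally, letting $n\to\infty$, then $\epsilon\to0$ (using $\tau(e)<\infty$ and quasi-additivity on the commutative algebra generated by $b$ and $e$, since $eb=b$ means they commute) and $\delta\to1$, gives $\tau(a)\leq\tau(b)$. In case (ii), $e=1$ and $\tau(1)<\infty$ because $\tau$ is bounded, so every step goes through with only fullness of $c'$ needed, and that is automatic from the invertible first block.
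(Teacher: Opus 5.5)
There is a genuine gap in case (i), at exactly the step you label ``delicate bookkeeping.'' After weakening the hypothesis to $\phi(\delta a)\leq\delta\phi(b+\epsilon e)$, you need to bound $\phi(b+\epsilon e)$ by $\lambda\, d_{\phi^{(n)}}((c''-\eta')_+)$ with $\lambda\delta<1$. Here $c''\sim c'$ is built from layers of $x=b+\epsilon e$ cut above a positive level. This inequality is false in general, and it is a pure inequality about a ucp map, so strict $\phi$-comparison cannot rescue it.

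The reason is that off the support of $b$ one has $x=\epsilon e$, and $e$ may have spectrum accumulating at $0$ there. On the other hand, $(c''-\eta')_+$ has no spectral mass where $x\leq\eta$. Concretely, take $A=C[0,1]$, $\phi=\mathrm{id}$, $e(t)=\min\{2t,1\}$, and $a,b$ supported in $[\frac12,1]$. Then the right-hand side vanishes on $[0,\eta/(2\epsilon)]$, while $\phi(x)(t)=2\epsilon t>0$ there.

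Your additive error $2\eta\phi(1)=2\eta 1$ can be absorbed into $\epsilon\phi(e)$ only when $\phi(e)$ dominates a multiple of $1$. Shrinking $\delta$ to $\delta'$ does not help either, since an additive constant cannot be absorbed by a multiplicative gap unless the right-hand side dominates a multiple of $1$. So your route does work in case (ii), where $b+\epsilon 1\geq\epsilon 1$, but not in case (i).

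The missing idea is to use $eb=b$ on the $\phi$-side (you only use it at the very end) and not to enlarge $\phi(b)$ to $\phi(b+\epsilon e)$. In $C^*(b,e)$, wherever $b\neq0$ on the joint spectrum one has $e=1$, so $x=b+\epsilon\geq\epsilon$ there. Hence, for $0<\eta''<\epsilon$ and $x$ a contraction,
$$
b\leq\frac1n\sum_{i=1}^n\chi_{(\frac{i-1}{n}+\eta'',\infty)}(x),
$$
with no additive error. Take $c''=\bigoplus_i g_i(x)$ and $\eta(1+\eta')<\epsilon$. Then
$$
d_{\phi^{(n)}}(c)\leq\delta\phi(a)\leq\delta\phi(b)\leq\delta\, d_{\phi^{(n)}}((c''-\eta')_+),
$$
and the rest of your argument goes through. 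The ideal condition follows from $a^{1/2}(x-\eta)_+a^{1/2}\geq(\epsilon-\eta)a$.

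This is precisely the paper's key inequality $(c-\frac2n)_+\geq(1+\epsilon-\frac2n)b$ for $c=b+\epsilon e$. The paper takes its slack $\delta_n=(1+\epsilon-\frac2n)^{-1}$ from this inequality rather than from scaling $a$ by $\delta$.

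Two smaller remarks. First, you should rescale so that $b+\epsilon e$ is a contraction, since your layers only reach level about $1$. Second, the finiteness obstacle you anticipate does not arise. The first layer contributes $\frac1n d_\tau((x-\eta)_+)\leq\tau(x)/(n\eta)$, which tends to $0$ as $n\to\infty$ with $\eta$ fixed, and the remaining layers contribute at most $\tau(x)$. The paper avoids even this by starting its layers at level $\frac1n$, so that $\frac1n d_\tau(c')\leq\tau(c)$ holds exactly.
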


\begin{proof}
Since strict $\phi$-comparison (of full elements) passes to
$(M_m(A),\phi^{(m)})$, replacing $\tau$ by $\tau_m$, we may
assume that $m=1$.

Let $n\geq 2$. For $i=1,\ldots,n-1$, set
$$
\xi_i(t)=\min\{1,(nt-i)_+\},
\qquad t\in[0,1],
$$
and $\xi_n=0$. Given a positive contraction $c\in A$, define
\begin{equation}\label{cprime}
c'=\bigoplus_{i=1}^n\xi_i(c)\in M_n(A).
\end{equation}
We claim that
\begin{equation}\label{sandwich1}
d_{\phi^{(n)}}(c')
\leq\phi(c),\quad \phi((c-\tfrac2n)_+)\leq d_{\phi^{(n)}}\Big((c'-\frac12)_+\Big).
\end{equation}
Indeed,
 for every positive contraction
$h\in C_0(0,1]$, we have
$$
\frac1n\sum_{i=1}^n h(\xi_i(t))\leq\frac1n\sum_{i=1}^n \mathbbm{1}_{(\frac{i}n,1]}(t)\leq t.
$$
Hence,
$$
\phi^{(n)}(h(c'))\leq\phi(c).
$$
The characterization of comparison with $d_\phi$ given in Remark \ref{rmk: nobidual-comparison} proves 
$d_{\phi^{(n)}}(c')
\leq\phi(c)$. On the other hand, from $\xi_{i+1}(t)\leq \mathbbm{1}_{(\frac{i+1}{n},1]}\leq 2(\xi_i(t)-\frac12)_+$
we deduce that 
$$
\phi(\xi_{i+1}(c))\leq d_\phi\Big((\xi_i(c)-\frac12)_+\Big)
$$
for $i=1,\ldots,n-1$. Adding over all $i$ we get
$$
\phi((c-\tfrac2n)_+)=\frac1n\sum_{i=1}^{n-1}\phi(\xi_{i+1}(c))\leq 
\frac1n\sum_{i=1}^{n-1} d_{\phi}\Big((\xi_i(c)-\frac12)_+\Big)
=d_{\phi^{(n)}}((c'-\tfrac12)_+).
$$
This proves \eqref{sandwich1}.

A similar calculation, using additivity of $\tau$ on commuting
positive elements, gives
\begin{equation}\label{sandwich2}
\tau((c-\tfrac1n)_+)
\leq\frac1n d_\tau(c')
\leq\tau(c).
\end{equation}

Now suppose that $a,b\in A_+$ satisfy $\phi(a)\leq\phi(b)$ and that
$ea=a$, $eb=b$ for a positive contraction $e\in A_+$ such that
$\tau(e)<\infty$. After a common rescaling,
assume $\|a\|,\|b\|<1$. Fix $\epsilon>0$ small enough that
$c=b+\epsilon e$ is a contraction and choose $n\in \N$ such that
$2/n<\epsilon$. Since   $e$ and $b$ commute, functional calculus in
$C^*(b,e)$ gives
\begin{equation}\label{deltanb}
(c-\tfrac2n)_+\geq(1+\epsilon-\tfrac2n)b.
\end{equation}
Indeed, on the part of the joint spectrum where $b$ is
nonzero, $e$ has value $1$, and $0\leq b\leq1$.
Set $\delta_n=(1+\epsilon-2/n)^{-1}<1$.

Form elements $a',c'$ as in  \eqref{cprime}. Then
\begin{align*}
d_{\phi^{(n)}}(a')\leq\phi(a) &\leq\phi(b)\\
&\leq\delta_n\phi((c-\tfrac2n)_+)\\
&\leq \delta_n d_{\phi^{(n)}}((c'-\tfrac12)_+),
\end{align*}
where we have used \eqref{sandwich1} and \eqref{deltanb}.
Since $\xi_1(t)\geq t-2/n$ for $t\in[0,1]$, we have
\begin{align*}
a^{1/2}\xi_1(c)a^{1/2}
&\geq
a^{1/2}\left(c-\frac2n1\right)a^{1/2}\\
&\geq
a^{1/2}\left(\epsilon e-\frac2n1\right)a^{1/2}
\\
&=
\left(\epsilon-\frac2n\right)a.
\end{align*}
Since $\epsilon>2/n$, it follows that  $a\in \operatorname{Ideal}_{A}(\xi_1(c))$, and  so 
$a'\in\operatorname{Ideal}_{M_n(A)}(c')$.
Strict $\phi$-comparison therefore gives $a'\precsim c'$.
Using \eqref{sandwich2},
$$
\tau((a-\tfrac1n)_+)
\leq\frac1n d_\tau(a')
\leq\frac1n d_\tau(c')
\leq\tau(c)=\tau(b)+\epsilon\tau(e).
$$
Let $n\to\infty$ and then $\epsilon\downarrow0$.
Since $\tau(e)<\infty$,  we get $\tau(a)\leq\tau(b)$.

In the case of strict $\phi$-comparison of full elements, we again assume that $\|a\|,\|b\|<1$, set   $e=1$ and form elements $a',c'$ as above.
Since $c'$ is full in $A$, the application of strict $\phi$-comparison is justified. Hence, as above, we deduce that $\tau(a)\leq \tau(b)$.
\end{proof}
	
\begin{theorem}\label{thm:quasitraces-are-traces}
Let $(A,\phi)$ be a pair consisting of a unital C*-algebra $A$
and a ucp map $\phi\colon A\to A$.
\begin{enumerate}[(i)]
\item
If $(A,\phi)$ has 	strict $\phi$-comparison of full elements, then every bounded 2-quasitrace on $A$ is a trace.
\item
If $(A,\phi)$ has 	strict $\phi$-comparison,  
then every lower semicontinuous $[0,\infty]$-valued
$2$-quasitrace on $A$ is a trace.
\end{enumerate}
\end{theorem}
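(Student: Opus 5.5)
The plan is to use Lemma~\ref{lem:quasitrace-order1} to show that $\tau$ is additive. The idea is that $\tau(a)$ depends only on $\phi(a)$, and $\phi$ is linear. A 2-quasitrace is a trace exactly when it is additive, $\tau(a+b)=\tau(a)+\tau(b)$ for positive $a,b$; by the standard reduction it suffices to check this on positive elements of $A$, using that $\tau_2$ is a quasitrace on $M_2(A)$.

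So fix $a,b\in A_+$ and work in $M_2(A)$ with the two positive elements
$$
x=\begin{pmatrix} a+b & 0\\ 0 & 0\end{pmatrix},\qquad y=\begin{pmatrix} a&0\\0&b\end{pmatrix}.
$$
These satisfy $\phi^{(2)}(x)=\tfrac12\phi(a+b)=\phi^{(2)}(y)$ by linearity of $\phi$. Applying Lemma~\ref{lem:quasitrace-order1} in both directions gives $\tau_2(x)=\tau_2(y)$. Since $\tau_2$ restricts to $\tau$ on the corner and is additive on orthogonal elements, this says
$$
\tau(a+b)=\tau_2(x)=\tau_2(y)=\tau(a)+\tau(b).
$$

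For part (i), with $\tau$ bounded, Lemma~\ref{lem:quasitrace-order1}(ii) applies directly, so the argument above already finishes the proof.

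Part (ii) is where the work lies, because Lemma~\ref{lem:quasitrace-order1}(i) needs a positive contraction $e\in M_2(A)$ with $ex=x$, $ey=y$ and $\tau_2(e)<\infty$. I will first reduce to the case where $a,b$ are of the form $(a_0-\epsilon)_+,(b_0-\epsilon)_+$. This is enough because $\tau$ is lower semicontinuous and a quasitrace is monotone, so $\tau(a)=\sup_\epsilon\tau((a-\epsilon)_+)$. Since $(a+b-\epsilon)_+$ is not directly controlled by the cut-downs of $a$ and $b$, the plan is to prove superadditivity and subadditivity separately, approximating each side.

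For the units, I will take $e$ to be a block-diagonal element $g(a_0)\oplus g(b_0)$, or $f(a_0+b_0)$ in the corner, with $g$ supported away from $0$. Such an element acts as a unit on the cut-downs. The main obstacle is guaranteeing $\tau(e)<\infty$. Writing $I_\tau=\{c:\tau(c^*c)<\infty\}$ for the ideal on which $\tau$ is finite, this holds when $a_0,b_0$ lie in the Pedersen ideal of $\overline{I_\tau}$, since on that ideal $\tau$ is finite on cut-downs of the form $(c-\epsilon)_+$. If $\tau(a)=\infty$ or $\tau(b)=\infty$, additivity follows from monotonicity, $\tau(a+b)\ge\max(\tau(a),\tau(b))$.

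It then remains to show $\tau(a+b)<\infty$ when $\tau(a),\tau(b)<\infty$. For this I will use the quasitrace inequality $\tau(a+b)\le 2(\tau(a)+\tau(b))$, which holds for 2-quasitraces. With all values finite, there is a single ideal on which everything is finite. The needed unit $e$ then lives in $M_2$ of that ideal's Pedersen ideal, after replacing $a$ and $b$ by cut-downs and taking limits as $\epsilon\to0$, again by lower semicontinuity.
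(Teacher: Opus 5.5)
\textbf{Where the proposal breaks.} Part (i), the comparison of $(a+b)\oplus0$ with $a\oplus b$ through $\phi^{(2)}$, the disposal of the case $\tau(a)=\infty$ or $\tau(b)=\infty$, and the use of Haagerup's inequality to get $\tau(a+b)<\infty$ all match the paper. The gap is the unit you need in part (ii).

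Lemma~\ref{lem:quasitrace-order1}(i) requires a \emph{single} positive contraction $e\in M_2(A)$ with $ex=x$, $ey=y$ and $\tau_2(e)<\infty$. Comparing the $(1,1)$ entries of $ex=x$ and $ey=y$ shows that the corner $e_{11}$ satisfies $e_{11}a=a$, $e_{11}b=b$ and $\tau(e_{11})\le\tau_2(e)<\infty$. So you need a common local unit of finite trace for $a$ and $b$ themselves.

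Your candidates do not supply one. The element $g(a_0)\oplus g(b_0)$ does not fix $x$, since $g(a_0)$ need not fix $b$. The element $f(a_0+b_0)$ need not fix $(a_0-\epsilon)_+$, because $a_0\le a_0+b_0$ gives no control of the spectrum of $a_0+b_0$ near $0$.

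The claim that such a unit exists in the Pedersen ideal of $\overline{I_\tau}$ is also false. Take $J=\{f\in C([0,1],M_2): f(0)\in\mathbb{C}p\}$, with $p$ the projection onto the first coordinate. Let $A$ be the unitization of $J\otimes\mathcal K$, and set $\tau(c)=\int_0^1\mathrm{Tr}(c(t))\,dt$, where $\mathrm{Tr}$ is the canonical trace on $B(\mathbb{C}^2\otimes\ell^2)$. Let $a=q\otimes P$ and $b=r\otimes P$, where $P$ is a rank-one projection and $q_t,r_t$ are the projections onto $\mathbb{C}(\cos t,\pm\sin t)$. Then $a,b$ are projections in the Pedersen ideal with $\tau(a)=\tau(b)=1$, so cutting down changes nothing. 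Yet any positive contraction $e$ with $ea=a$, $eb=b$ and $\tau(e)<\infty$ would have to lie in $J\otimes\mathcal K$. It would act as the identity on the range of $1_2\otimes P$ for every $t>0$, hence also at $t=0$, which contradicts $e(0)\in p\otimes\mathcal K$. The theorem is of course true here; the point is that your step cannot be carried out.

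\textbf{The missing idea.} The paper manufactures the common unit from $a+b$ and then undoes the compression using $\tau(z^*z)=\tau(zz^*)$.

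Once $\tau(a+b)<\infty$, take an approximate unit $(e_n)$ of $C^*(a+b)$ with $e_{n+1}e_n=e_n$. By functional calculus each $e_n$ is dominated by a multiple of $a+b$, so $\tau(e_n)<\infty$. Then $e_{n+1}\oplus e_{n+1}$ is a finite-trace unit for both $e_n^{1/2}(a+b)e_n^{1/2}\oplus0$ and $e_n^{1/2}ae_n^{1/2}\oplus e_n^{1/2}be_n^{1/2}$. Lemma~\ref{lem:quasitrace-order1}(i) therefore gives additivity for the compressed elements.

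Next, $\tau(e_n^{1/2}ae_n^{1/2})=\tau(a^{1/2}e_na^{1/2})$. Since $a\le a+b$, the elements $a^{1/2}e_na^{1/2}\le a$ converge to $a$ in norm, and similarly for $b$ and $a+b$. Lower semicontinuity then yields $\tau(a)+\tau(b)=\tau(a+b)$.
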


\begin{proof}(Cf. \cite[proof of Theorem~3.6]{NgRobert}.)
Let $\tau\in QT(A)$. We aim to show that $\tau$ is additive under the hypotheses of either (i) or (ii). 

Let $a,b\in A$ be positive. Suppose first that there exists a positive contraction
$e$ such that $ae=a$, $be=b$, and $\tau(e)<\infty$. 
Then 
$$
\phi^{(2)}\Big(\begin{pmatrix}a+b&0\\0&0\end{pmatrix}\Big)=\frac12(\phi(a)+\phi(b))=\phi^{(2)}\Big(\begin{pmatrix}a&0\\0&b\end{pmatrix}\Big).
$$
It follows by the previous lemma that 
$$
\tau_2\Big(\begin{pmatrix}a+b&0\\0&0\end{pmatrix}\Big)=\tau_2\Big(\begin{pmatrix}a&0\\0&b\end{pmatrix}\Big).
$$
The left-hand side agrees with $\tau(a+b)$, while the right-hand side 
agrees with $\tau(a)+\tau(b)$. Thus, $\tau(a+b)=\tau(a)+\tau(b)$.
Assuming that $\tau$ is bounded and choosing $e=1$, we obtain part (i).

Consider now general positive $a,b\in A$. If either $\tau(a)=\infty$
or $\tau(b)=\infty$, then $\tau(a+b)=\infty$, since $\tau$ is order preserving, and so $\tau(a+b)=\tau(a)+\tau(b)$. Assume thus that $\tau(a),\tau(b)<\infty$. 
By \cite[Lemma~3.5(1)]{haag}
(see also \cite{blanchard-kirchberg}), we have
$$
\tau(a+b)^{1/2}
\leq \tau(a)^{1/2}+\tau(b)^{1/2}.
$$
Thus, $\tau(a+b)<\infty$. Let $(e_n)_n$ be an increasing sequence of positive contractions
in $C^*(a+b)$ forming an approximate unit of $C^*(a+b)$ and such that $e_{n+1}e_n=e_n$ for all $n$. Note that 
$\tau(e_n)<\infty$ for all $n$. By the result from the preceding paragraph, 
$$
\tau(e_n^{\frac12}ae_n^{\frac12})+\tau(e_n^{\frac12}be_n^{\frac12})= \tau(e_n^{\frac12}(a+b)e_n^{\frac12}),
$$
for all $n$. Hence,
$$
\tau(a^{\frac12}e_na^{\frac12})+\tau(b^{\frac12}e_nb^{\frac12})=
\tau((a+b)^{\frac12}e_n(a+b)^{\frac12}).
$$
Letting $n\to \infty$ and using the lower semicontinuity of $\tau$ we obtain $\tau(a)+\tau(b)=\tau(a+b)$. Thus $\tau$ is additive.
\end{proof}

\section{Glimm halving}

Let us continue to denote by $A$ a unital C*-algebra, and by
$\phi\colon A\to A$ a ucp map on $A$. We let
$C=C^*(A,s_1,s_2,\ldots)$,
where the $s_i$ are freely independent $A$-valued
semicircular elements with covariance $\phi$. We let
$E\colon C\to A$ be the canonical expectation and $\widetilde\phi=\phi\circ E$.

We say that a Cuntz class $s\in\Cu(A)$ is
$(2,\omega)$-divisible \cite{RR} if for every $s'\ll s$ there exist
$r\in\Cu(A)$ and $N\in\mathbb N$ such that
$$
2r\leq s,\qquad s'\leq Nr.
$$
If $s=[a]$, where $a\in(A\otimes\mathcal K)_+$, this is
equivalent to requiring that, for every
$c\in\operatorname{Ideal}(a)_+$ and $\varepsilon>0$,
there exists $y\in\overline{a(A\otimes\mathcal K)a}$ such that
$y^2=0$ and $(c-\varepsilon)_+\in\operatorname{Ideal}(y)$.

We first establish Glimm halving for full elements.

\begin{theorem}\label{thm:glimm-halving-full}
If $(A,\phi)$ is weakly selfless
and $\phi\colon A\to A$ is a ucp map,
then every full Cuntz class in $\operatorname{Cu}(A)$
is $(2,\omega)$-divisible.
\end{theorem}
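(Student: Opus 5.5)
The plan is to reduce to a full positive contraction $a\in M_n(A)$, build the required square-zero element in the free dilation $C$, and transfer it to $A$ through the weakly selfless embedding $C\to A^{\cU}$. Since weak selflessness passes to matrix amplifications and $(2,\omega)$-divisibility of $[a]$ only involves approximations $(a-\epsilon)_+$, standard approximation arguments let me assume $a\in A$ is a full positive contraction, as in the proof of Theorem~\ref{thm:almost-unperforated}. Because $a$ is full, $\operatorname{Ideal}(a)=A$. By the reformulation of $(2,\omega)$-divisibility recalled above, it then suffices to find $y\in\overline{aAa}$ with $y^2=0$ and $y$ full. Indeed, once $1\in\operatorname{Ideal}(y)$, every $(c-\varepsilon)_+$ lies in $\operatorname{Ideal}(y)$.

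\textbf{Step 1 (orthogonal pieces in $C$).} In $C$, consider the selfadjoint element $w=a^{1/2}s_1a^{1/2}\in\overline{aCa}$. Its positive and negative parts $w_+,w_-$ are orthogonal positive elements of $\overline{aCa}$. I would show that both are full in $C$. The idea is to use freeness of $s_1$ from $A$, together with $E(s_1^2)=\phi(1)=1$, $E(s_1)=0$, and the fullness of $a$ (write $1=\sum x_i^*ax_i$). Applying $E$, and $\widetilde\phi$ if needed, to suitable products $z^*w_\pm z$ with $z\in C^*(A,s_2,s_3,\dots)$ should produce an invertible element of $A$. This mirrors how Lemma~\ref{lem:preprocess} uses an auxiliary free semicircular to make $\widetilde\phi(b')$ invertible. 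The symmetry $s_1\mapsto -s_1$, which is an automorphism of $C$ fixing $A$ and the other $s_i$, exchanges $w_+$ and $w_-$, so it is enough to treat $w_+$.

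\textbf{Step 2 (the square-zero element).} For any $x\in C$, put $y_x=w_+^{1/2}x\,w_-^{1/2}$. Since $w_-w_+=0$, we get $y_x^2=0$, and $y_x\in\overline{aCa}$. Because $\operatorname{Ideal}(w_+)$ and $\operatorname{Ideal}(w_-)$ both equal $C$, the ideal generated by all the $y_x$ contains $\operatorname{Ideal}(w_+)\operatorname{Ideal}(w_-)=C$. So finitely many $y_{x_1},\dots,y_{x_m}$ generate an ideal containing $1$ up to $\epsilon$. The remaining task is to package these into a single square-zero element. I would first try $y=w_+^{1/2}\bigl(\sum_j u_jx_j\bigr)w_-^{1/2}$, with the $u_j$ further free semicirculars $s_{N+j}$ chosen so that the cross terms have vanishing expectation. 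Alternatively, I would use the extra free copies to cut $w_+$ and $w_-$ into orthogonal pieces and sum the corresponding $y$'s. \emph{This packaging into a single full $y$, together with fullness in Step~1, is where I expect the main difficulty.} If needed, I would settle for ``$(c-\varepsilon)_+\in\operatorname{Ideal}(y)$'' directly, which is all the reformulation requires.

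\textbf{Step 3 (transfer to $A$).} Apply the $*$-homomorphism $\widetilde\sigma\colon C\to A^{\cU}$ fixing $A$. Its image $\widetilde\sigma(y)$ lies in $\overline{aA^{\cU}a}$, has square zero, and satisfies $\|1-\sum_k z_k^*\widetilde\sigma(y)^*\widetilde\sigma(y)z_k\|<1$. Passing to coordinates gives $y_n\in\overline{aAa}$ with $\|y_n^2\|\to0$ and the same fullness estimate. A standard perturbation then yields an exact square-zero element $y'\in\overline{aAa}$ that is still full. One way is to replace $y_n$ by $f(y_ny_n^*)\,y_n\,g(y_n^*y_n)$ for suitable orthogonal cut-downs; another is to use semiprojectivity of the cone over $M_2$ to lift a nearly square-zero element to an exactly square-zero one. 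This gives $(2,\omega)$-divisibility of $[a]$.
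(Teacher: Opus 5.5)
\textbf{Step 1 is not merely unproved; as formulated it is false.} In $\cT_\phi$ one has $\ell_\phi^*a\ell_\phi=\phi(a)$, hence
\[
\|a^{1/2}\ell_\phi a^{1/2}\|^2=\|a^{1/2}\phi(a)a^{1/2}\|.
\]
So $a^{1/2}s_\phi a^{1/2}=a^{1/2}\ell_\phi a^{1/2}+(a^{1/2}\ell_\phi a^{1/2})^*$ vanishes whenever $a^{1/2}\phi(a)a^{1/2}=0$, and then its image $w=a^{1/2}s_1a^{1/2}$ in $C$ is zero. Nothing in the hypotheses rules this out for a full $a$. Take $A=\mathcal O_2$ and $\rho$ a pure state; it is GNS-faithful and non-tracial, so $(A,\rho)$ is selfless, and $\phi=\rho(\cdot)1$ is selfless (the case $B=\C$ of the expectation used for Corollary~\ref{cor:tensor}). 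Take $0\neq a\geq0$ with $\rho(a)=0$. Then $a$ is full, but $w_\pm=0$.

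The underlying problem is that a semicircular sandwiched between copies of $a^{1/2}$ sees $a$ only through $\phi(a)$; indeed $E(w_\pm^2)=\tfrac12 a^{1/2}\phi(a)a^{1/2}$. Fullness of $a$ enters only through $\phi(\sum_i x_i^*ax_i)=\phi(1)=1$, and conjugating $w_\pm$ afterwards cannot repair this.

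Two further ingredients are missing. First, the packaging in Step~2 into a single full square-zero element is left open. Second, even an invertible expectation $E(z^*w_+z)$ would not by itself show that $w_+$ is full. A conditional expectation need not map ideals of $C$ into themselves, and you never say why $E$ or $\widetilde\phi$ would.

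The paper avoids all three issues by normalizing before symmetrizing, placing the first semicirculars outside $a$. With $1=\sum_{i=1}^n x_i^*ax_i$ and $x=\sum_i x_is_i$, the element $b=x^*ax$ satisfies $[b]\leq[a]$ and $E(b)=\phi(1)=1$. Then $h=b^{1/2}s_{n+1}b^{1/2}$ has $E(h^2)=E(b^{1/2}\phi(E(b))b^{1/2})=1$, and the flip $s_{n+1}\mapsto -s_{n+1}$ gives $E(h_\pm^2)=\tfrac12$.

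One more free semicircular does the packaging at once. The element $y=h_+s_{n+2}h_-$ satisfies $y^2=0$ and
\[
E(yy^*)=E\bigl(h_+\phi(E(h_-^2))h_+\bigr)=\tfrac14,
\]
so $\widetilde\phi(yy^*)=\tfrac14 1$. Fullness of $y$ in $C$ follows because $(C,\widetilde\phi)$ is selfless (Lemma~\ref{lem:free-dilation}), so $\widetilde\phi$ preserves ideals by \cite[Theorem~5.3]{GJKEPR}. This is the tool your argument lacks.

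Since $y\in\overline{bCb}$ rather than $\overline{aCa}$, one concludes with Cuntz classes: $2[y^*y]\leq[b]\leq[a]$ with $[y^*y]$ full. From there your Step~3 (transfer to $A^{\cU}$ and descent to $A$) works as in the paper.
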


\begin{proof}
Let $s\in\Cu(A)$ be full, and write
$s=\sup_j s_j$ for an increasing sequence
$(s_j)_j$ in $W(A)$.
Since $A$ is unital, fullness of $s$ implies that $s_j$
is full for all sufficiently large $j$.
As $(2,\omega)$-divisibility is preserved under increasing
suprema, it suffices to consider full classes in $W(A)$, i.e.,
$s=[a]$ with $a\in M_m(A)_+$ for some $m$.
By permanence of weak selflessness under matrix
amplifications, we may moreover assume that $m=1$.
Let us thus assume that $s=[a]$ for a full positive element
$a\in A$.

Since $a$ is full, there exist $x_1,\ldots,x_n\in A$ such that
$$
1=\sum_{i=1}^n x_i^*ax_i.
$$
Set
$$
x=\sum_{i=1}^n x_i s_i,\qquad b=x^*ax\in C.
$$
Note that $E(b)=1$.
Now put
$$
h=b^{1/2}s_{n+1}b^{1/2}.
$$
Then, using freeness at the second step,
\begin{align*}
E(h^2)
&=E(b^{1/2}s_{n+1}bs_{n+1}b^{1/2})\\
&=E(b^{1/2}E(s_{n+1}E(b)s_{n+1})b^{1/2})\\
&=E(b)=1.
\end{align*}
(Here, the free independence identity that we have used is 
$$
E(x_1y_1x_2y_2x_3) 
=E(x_1E(y_1)x_2E(y_2)x_3) + E(x_1E(y_1^{\circ}E(x_2)y_2^{\circ})x_3)
$$
where $x_1y_1x_2y_2x_3$ is an alternating product and $y_i=E(y_i)+y_i^{\circ}$.)

The C*-algebra $C$ admits an automorphism $\alpha\colon C\to C$
that fixes  $A$ and such that 
$$
\alpha(s_{n+1}) = -s_{n+1},\quad \alpha(s_i)=s_i\text { otherwise.}
$$
Namely, $\alpha$ is the Bogoliubov automorphism
induced by the operator
$T\colon\ell^2(H_\phi)\to\ell^2(H_\phi)$
that multiplies the $(n+1)$st coordinate by $-1$.
Since $\alpha(h)=-h$, we have $\alpha(h_+)=h_-$, and since
$\alpha$ preserves $E$, we obtain
$$
E(h_+^2)=E(h_-^2)=\frac12 1.
$$

Define
$$
y=h_+s_{n+2}h_-.
$$
Then $y\in\overline{bCb}$ and $y^2=0$.
Let us show that $y$ is full.
Using again freeness,
$$
\begin{aligned}
E(yy^*)
&=E\bigl(h_+s_{n+2}h_-^2s_{n+2}h_+\bigr)\\
&=E\bigl(h_+E(s_{n+2}E(h_-^2)s_{n+2})h_+\bigr)\\
&=\frac12 E(h_+^2)
=\frac14 1.
\end{aligned}
$$
Consequently,
$$
\widetilde\phi(yy^*)=\frac14 1.
$$
Since $\widetilde\phi$
is selfless by Lemma~\ref{lem:free-dilation},
it preserves closed two-sided ideals of $C$
\cite[Theorem~5.3]{GJKEPR}. Thus, $y$ is full in $C$.

We have
$$
2[y^*y]\leq[b]\leq[a],
$$
and $[y^*y]$ is full in $\Cu(C)$.
Thus, $[a]\in\Cu(C)$ is $(2,\omega)$-divisible.
By weak selflessness, there is a *-homomorphism
$\sigma\colon C\to A^{\cU}$ fixing $A$.
Therefore,
$[a]\in\Cu(A)\subseteq\Cu(A^{\cU})$
is $(2,\omega)$-divisible in $\Cu(A^{\cU})$.
By an ultrapower descent argument,
we obtain that $[a]$ is $(2,\omega)$-divisible in
$\Cu(A)$, as required; see
\cite[proof of Corollary~5.3]{VilaltaGlimmSurvey}.
\end{proof}

The C*-algebra $A$ is said to satisfy the global Glimm halving
property if all $s\in\Cu(A)$ are
$(2,\omega)$-divisible.

\begin{lemma}\label{lem:vacuum-ideal-preserving}
Suppose that $\phi$ preserves approximate units of ideals of $A$.
Then the expectation $E\colon C\to A$ preserves ideals of $C$,
i.e., $E(J)=J\cap A$ for every closed two-sided ideal $J$ of $C$.
\end{lemma}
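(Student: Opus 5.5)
One inclusion is immediate: $J\cap A\subseteq E(J)$, since $E$ fixes $A$. The work is in showing $E(J)\subseteq J\cap A$. For this I set $I=J\cap A$, a closed two-sided ideal of $A$, and fix an approximate unit $(e_\lambda)_\lambda$ of $I$. My plan is to prove, for $x\in J$ positive, that $e_\lambda E(x)e_\lambda\to E(x)$. Since $e_\lambda E(x)e_\lambda\in I$ and $I$ is closed, this gives $E(x)\in I$. Because $E$ is bimodular over $A$, we have $e_\lambda E(x)e_\lambda=E(e_\lambda xe_\lambda)$. It therefore suffices to show that $x-e_\lambda xe_\lambda\to0$ for $x\in J$, or equivalently, that $(e_\lambda)$ is an approximate unit for $J$ (or at least for a dense subset of $J_+$).

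The key claim is then: \emph{the ideal of $C$ generated by $I$ is all of $J$}, i.e.\ $J=\overline{CIC}$. Once this holds, an approximate unit of $I$ is an approximate unit of $\overline{CIC}$, because elements $c_1ic_2$ are approximated by $e_\lambda c_1 i c_2 e_\lambda$. This last point needs a little care, since $e_\lambda$ need not commute with $c_1$. I would instead use the standard approach of the quotient map $\pi\colon C\to C/\overline{CIC}$ and show that $\pi(J)=0$.

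To prove $J\subseteq\overline{CIC}$, I pass to the quotient. The map $\phi$ induces a ucp map $\dot\phi$ on $A/I$; this is well defined because $\phi(I)\subseteq I$, which follows from the approximate-unit hypothesis: $\phi(e_\lambda)$ is an approximate unit of $I$, hence lies in $I$, and $I$ is hereditary. I then expect that $C/\overline{CIC}$ is identified with the amalgamated free product $(*_{A/I})\cS_{\dot\phi}$. Granting this identification, I need the corresponding statement one level down: in the reduced free product over $B=A/I$, any nonzero ideal meets $B$ in a nonzero element. The reasoning is that the canonical expectation $\dot E$ is faithful, so for any nonzero $y\geq0$ in an ideal $K$ we have $\dot E(y)\neq0$. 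The task is to show that $\dot E(y)$ itself lies in $K$. This is exactly the same statement again, now for $A/I$ and ideals $K$ with $K\cap B=0$. So I argue by contradiction. I take $K=\pi(J)$, which satisfies $K\cap(A/I)=0$ by maximality of $I$ in the sense that $J\cap A=I$. Faithfulness of $\dot E$ then produces $\dot E(y)\neq0$, and I need to push this back into $K$.

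The main obstacle is precisely this last step: showing that $E(y)$ belongs to the ideal generated by $y$, using the semicircular elements. My first attempt uses the averaging trick from the proof of Theorem~\ref{thm:glimm-halving-full}. For $y\in J_+$ written as a polynomial in the $s_i$ with $s_j$ not occurring, freeness gives
\[
E(s_j\,y\,s_j)=\phi(E(y)),
\]
and more generally compressions $s_{j}\cdots s_{j+m}$ implement iterates of $\phi$ applied to $E(y)$. Taking averages $\frac1N\sum_{j} w_j^*yw_j$ over free words $w_j$ in new semicircular elements should converge in norm to $E$-type quantities, via a Haagerup/Powers-type averaging in reduced free products over $A$ (here using that $\phi$ is unital). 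This would show that $\phi^m(E(y))$ lies in $\overline{CyC}\cap A$. Finally, since $\phi$ preserves approximate units of ideals, the ideal of $A$ generated by $E(y)$ coincides with the one generated by $\phi^m(E(y))$. For this I would apply the hypothesis to the ideal generated by $E(y)$ together with $\phi$-invariance, which needs checking. The conclusion is that $E(y)\in J$. The averaging convergence is the step I expect to need the most work, and I would model it on the proof of \cite[Theorem~5.3]{GJKEPR}.
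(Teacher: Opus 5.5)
\textbf{There is a genuine gap.} The last step of your argument is false as stated, and the step before it is only asserted.

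\textbf{The quotient detour.} This buys you nothing. The identification $C/\overline{CIC}\cong(*_{A/I})\cS_{\dot\phi}$ is only expected, not proved. Even granting it, you still need $\dot E(y)\in\operatorname{Ideal}(y)$ over $A/I$. That is the same kind of statement you started with, and if you could prove it, it would settle the lemma directly in $C$. So everything rests on your last paragraph.

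\textbf{The unproved averaging step.} Here the decisive input is only asserted: you need that $\widetilde\phi=\phi\circ E$ maps every ideal $J$ of $C$ into itself. The paper gets this from Lemma~\ref{lem:free-dilation} ($\widetilde\phi$ is selfless) together with \cite[Theorem~5.3]{GJKEPR} (selfless maps are approximately inner, hence preserve ideals). Your averaging idea can also deliver it, already with $m=1$. Realize $s_j=\ell_j+\ell_j^*$ on the Fock module of $\bigoplus_i H_\phi$. For $y\in C^*(A,s_1,\ldots,s_n)$ and $j>n$ one has $\ell_j^*y\ell_j=\phi(E(y))$. For sums over $n<j\leq n+N$, you also have $\|\sum_j\ell_jy\ell_j^*\|\leq\|y\|$ and $\|\sum_j\ell_jy\ell_j\|,\ \|\sum_j\ell_j^*y\ell_j^*\|\leq\sqrt N\|y\|$. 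Hence $\frac1N\sum_{j=n+1}^{n+N}s_jys_j\to\phi(E(y))$ in norm. Since $\frac1N\|\sum_{j=n+1}^{n+N}s_j^2\|$ stays bounded, an approximation shows that $\widetilde\phi(y)$ is a norm limit of elements $\sum_k v_k^*yv_k$ for every $y\in C$. This has to be carried out, though, not anticipated.

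\textbf{The final step is false.} You claim $\operatorname{Ideal}_A(E(y))=\operatorname{Ideal}_A(\phi^m(E(y)))$, and this fails in general. Take $A=M_2$ and $\phi(x)=x_{11}1$. This map preserves approximate units of ideals, since $A$ is simple and unital. Yet $y=e_{22}$ gives $\phi^m(E(y))=0$. Knowing that the single element $\phi^m(E(y))$ lies in $J$ is not enough, and $\phi$-invariance of $\operatorname{Ideal}_A(E(y))$ does not rescue it.

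\textbf{The missing idea.} Apply $\widetilde\phi(J)\subseteq J$ to $xyz$ for all $x,z\in A$, not just to $y$. By bimodularity of $E$, $\phi(xE(y)z)=\widetilde\phi(xyz)\in J\cap A$. So $\phi$ maps the whole ideal $K=\operatorname{Ideal}_A(E(y))$ into $J\cap A$. Now apply the hypothesis to $K$: for an approximate unit $(e_\lambda)_\lambda$ of $K$, the net $(\phi(e_\lambda))_\lambda$ is an approximate unit of $K$ contained in $J$. Hence $K\subseteq J$ and $E(y)\in J\cap A$. This is exactly the paper's argument, and it needs neither the quotient by $\overline{CIC}$ nor iterates of $\phi$.
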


\begin{proof}
By Lemma~\ref{lem:free-dilation}, the map
$\widetilde\phi=\phi\circ E\colon C\to C$ is selfless.
In particular, it is approximately inner
\cite[Theorem~5.3]{GJKEPR}, and hence preserves ideals.

Let $J$ be an ideal of $C$. Let $c\in J$ and set $K=\operatorname{Ideal}_A(E(c))$.
For every $x,y\in A$, we have
$$
\phi(xE(c)y)
=\widetilde\phi(xcy)
\in J\cap A.
$$
It follows that $\phi(K)\subseteq J\cap A$.
If $(e_\lambda)_\lambda$ is an approximate unit of $K$,
then $(\phi(e_\lambda))_\lambda\subset J\cap A$ is also an approximate
unit of $K$. Hence $K\subseteq J\cap A$, and in particular $E(c)\in J$.
This proves $E(J)\subseteq J\cap A$.
The reverse inclusion follows because $E$ fixes $A$.
\end{proof}

\begin{theorem}\label{thm:glimm-halving}
Let $A$ be a unital C*-algebra, and let
$\phi\colon A\to A$ be a weakly selfless ucp map
preserving approximate units of ideals of $A$.
Then $A$ has the global Glimm halving property.
\end{theorem}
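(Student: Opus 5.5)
The plan is to follow the proof of Theorem~\ref{thm:glimm-halving-full}, working inside the ideal generated by the element instead of all of $A$. Lemma~\ref{lem:vacuum-ideal-preserving} will play the role that fullness played there.

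\emph{Reductions.} As before, $(2,\omega)$-divisibility is preserved under increasing suprema, so it suffices to treat classes in $W(A)$. Both weak selflessness and preservation of approximate units pass to $(M_m(A),\phi^{(m)})$, so we may assume $s=[a]$ with $a\in A_+$. Let $I=\operatorname{Ideal}_A(a)$. We use the reformulation of $(2,\omega)$-divisibility: given $c\in I_+$ and $\varepsilon>0$, we must find $y\in\overline{aAa}$ with $y^2=0$ and $(c-\varepsilon)_+\in\operatorname{Ideal}(y)$. First we produce such a $y$ in $C$, then transfer it to $A^{\cU}$ via the weak-selflessness map $\sigma\colon C\to A^{\cU}$, and finally descend to $A$ by the same ultrapower argument as in \cite{VilaltaGlimmSurvey}.

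\emph{Construction in $C$.} Choose a positive contraction $e$ in the Pedersen ideal of $I$ such that $(c-\varepsilon)_+\in\operatorname{Ideal}_A(e)$. For instance, take $e=g(c)$ for a suitable continuous function $g$, so that $(c-\varepsilon)_+$ lies in the hereditary subalgebra generated by $e$. Write $e=\sum_{i=1}^n x_i^*ax_i$ with $x_i\in A$, and set $x=\sum_i x_is_i$ and $b=x^*ax\in C$. Then $b\precsim a$ and $E(b)=\sum_i\phi(x_i^*ax_i)$. Note that $E(b)$ generates the ideal $I$, because $\phi$ preserves approximate units of $I$ and $\phi(I)\subseteq I$ by ideal preservation. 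As before, put $h=b^{1/2}s_{n+1}b^{1/2}$ and use the Bogoliubov automorphism flipping $s_{n+1}$ to get $E(h_\pm^2)=\tfrac12E(h^2)$. Then set $y=h_+s_{n+2}h_-$, so that $y^2=0$ and $y$ lies in the hereditary subalgebra of $b$, hence $[y^*y]$ satisfies $2[y^*y]\leq[b]\leq[a]$. The freeness computation gives
$$
E(yy^*)=E\bigl(h_+\phi(E(h_-^2))h_+\bigr),\qquad E(h_-^2)=\tfrac12E\bigl(b^{1/2}\phi(E(b))b^{1/2}\bigr).
$$

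\emph{Main obstacle: showing that $(c-\varepsilon)_+\in\operatorname{Ideal}_C(y)$.} Let $J=\operatorname{Ideal}_C(y)$. By Lemma~\ref{lem:vacuum-ideal-preserving}, $E(J)=J\cap A$, so $E(yy^*)\in J\cap A$. We need to show that $E(yy^*)$ generates all of $I$ inside $A$. The plan is to peel the nested expressions one layer at a time, using at each step the same argument as in the proof of Lemma~\ref{lem:vacuum-ideal-preserving}, namely that $\phi$ and $E$ preserve ideals. Concretely, $\operatorname{Ideal}_A(E(h_-^2))\supseteq\operatorname{Ideal}_A(E(b^{1/2}\phi(E(b))b^{1/2}))$. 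The latter should equal $I$: the element $\phi(E(b))$ generates $I$, and compressing $b$ by an element whose ideal contains the support of $b$ should not shrink the ideal generated by $E(\cdot)$. This is exactly the point where the approximate-unit hypothesis is needed, through an argument of the form: if $(u_\lambda)$ is an approximate unit of $I$, then $b^{1/2}u_\lambda b^{1/2}\to b$. Repeating this for the outer layer $h_+(\cdot)h_+$ gives $\operatorname{Ideal}_A(E(yy^*))=I$, and therefore $(c-\varepsilon)_+\in I\subseteq J$.

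Applying $\sigma$ yields the required $y$ in $A^{\cU}$, and the descent argument completes the proof. I expect the ideal bookkeeping in this last step to be the delicate part; if the direct argument fails, a fallback is to apply Theorem~\ref{thm:glimm-halving-full} to the unitization of $I$, or to a corner in which $a$ is full.
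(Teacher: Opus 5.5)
\textbf{There is a genuine gap.} It sits in the ideal bookkeeping you flag as delicate, but it starts one step earlier, with the claim that $E(b)=\phi(e)$ generates $I$.

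Preservation of approximate units is a statement about nets. It gives no information about the single element $\phi(e)$.
\begin{itemize}
\item Since $\phi$ preserves ideals, $\phi(e)\in\operatorname{Ideal}_A(e)\subseteq\operatorname{Ideal}_A(c)$. This is a proper subideal of $I$ whenever $c$ does not generate $I$.
\item More seriously, even when $I=A$ and $e$ is full, $\phi(e)$ need not be full, and may even be $0$. Take $\phi=\rho(\cdot)1$ with $\rho$ a pure state on $\mathcal{O}_2$; this is selfless by the facts recalled in the introduction. Then $\phi(e)=0$ whenever $\rho(e)=0$, so $E(h^2)=0$ and $E(yy^*)=0$, and Lemma~\ref{lem:vacuum-ideal-preserving} gives nothing.
\end{itemize}
So $\operatorname{Ideal}_A(E(yy^*))=I$ cannot be expected.

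Your peeling step is also unjustified as stated. Lemma~\ref{lem:vacuum-ideal-preserving} only moves information from $C$ to $A$ in the direction $E(J)\subseteq J$. Knowing that $b^{1/2}u_\lambda b^{1/2}\to b$ in $C$ does not tell you which ideal of $A$ is generated by $E(b^{1/2}wb^{1/2})$. For that you need a lower bound for $E(\cdot)$ by an explicit element of $A$.

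\textbf{What is missing} is to use approximate rather than exact generation, and to encode every layer of the construction in advance. It suffices to treat $(a-\varepsilon)_+$, so $e$ need not depend on $c$. The paper proceeds as follows.
\begin{itemize}
\item Take $e_\lambda$ from an approximate unit of $I$ lying in its Pedersen ideal, and set $f_\lambda=\phi(e_\lambda)$, $g_\lambda=f_\lambda\phi(f_\lambda^2)f_\lambda$, $k_\lambda=g_\lambda\phi(g_\lambda)g_\lambda$. By hypothesis these are again approximate units of $I$, so one can fix $\lambda$ with $(a-\varepsilon/2)_+\in\operatorname{Ideal}_A(k_\lambda)$.
\item Write $e_\lambda=\sum_i x_i^*ax_i$ and set $b=x^*ax$, $h=bs_{n+1}b$ and $y=h_+^2s_{n+2}h_-$. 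These squares are chosen so that Kadison--Schwarz for $E$ applies directly. It gives $E(h^2)=E(b\phi(E(b^2))b)\geq g_\lambda$, hence $p:=E(h_\pm^2)\geq\frac12 g_\lambda$, and also $E(yy^*)\geq p\phi(p)p$.
\item Lemma~\ref{lem:vacuum-ideal-preserving} puts $p\phi(p)p$ in $J=\operatorname{Ideal}_C(y)$. So $p$ and $\phi(p)$ are orthogonal in $C/J$, and therefore so are $g_\lambda$ and $\phi(g_\lambda)$. This gives $k_\lambda\in J$, hence $(a-\varepsilon/2)_+\in J$.
\end{itemize}

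Your fallbacks do not close the gap. $\phi$ does not leave corners of $A$ invariant, and there is no evident reason for its restriction to the unitization of $I$ to be weakly selfless. Moreover, $a$ is not full in that unitization when $I\neq A$.
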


\begin{proof}
By finite-matrix approximation and passage to increasing
suprema, it suffices to prove $(2,\omega)$-divisibility
for classes represented by positive elements in matrix
algebras over $A$.
Both weak selflessness and preservation of approximate
units of ideals pass to matrix amplifications.
We may therefore fix a positive $a\in A$ and show that $[a]$ is
$(2,\omega)$-divisible.

Set $I=\operatorname{Ideal}_A(a)$, and choose an approximate
unit $(e_\lambda)_\lambda$ of $I$ consisting of positive
contractions in its Pedersen ideal.
Define
$$
f_\lambda=\phi(e_\lambda),\qquad
g_\lambda=f_\lambda\phi(f_\lambda^2)f_\lambda,\qquad
k_\lambda=g_\lambda\phi(g_\lambda)g_\lambda.
$$
Using that $\phi$ preserves approximate units of ideals,
we readily deduce that
$(f_\lambda)_\lambda$, $(g_\lambda)_\lambda$, and
$(k_\lambda)_\lambda$ are approximate units of $I$.

Fix $\varepsilon>0$. Choose $\lambda$ such that
$$
(a-\varepsilon/2)_+
\in\operatorname{Ideal}_A(k_\lambda).
$$

Since $e_\lambda$ belongs to the Pedersen ideal of $I$, we have
$$
e_\lambda=\sum_{i=1}^n x_i^*ax_i
$$
with $x_i\in A$.
Set
$$
x=\sum_{i=1}^n x_i s_i,\qquad b=x^*ax.
$$
Then
$$
E(b)=\sum_{i=1}^n\phi(x_i^*ax_i)=\phi(e_\lambda)=f_\lambda.
$$

Now set
$$
h=bs_{n+1}b.
$$
By freeness and Schwarz's inequality,
$$
\begin{aligned}
E(h^2)
&=E(b\phi(E(b^2))b)\\
&\geq f_\lambda\,\phi(E(b^2))\,f_\lambda\\
&\geq f_\lambda\,\phi(f_\lambda^2)\,f_\lambda
=g_\lambda.
\end{aligned}
$$
The Bogoliubov automorphism $\alpha\colon C\to C$
fixing $s_i$ for all $i\neq n+1$ and mapping
$s_{n+1}$ to $-s_{n+1}$ fixes $b$ and preserves $E$.
Hence,
$$
p:=E(h_+^2)=E(h_-^2)=\frac12E(h^2)\geq\frac12g_\lambda.
$$

Define
$$
y=h_+^2s_{n+2}h_-.
$$
Then $y\in\overline{bCb}$ and $y^2=0$.
A further application of freeness and Schwarz's inequality gives
$$
\begin{aligned}
E(yy^*)
&=E\bigl(h_+^2\,\phi(E(h_-^2))\,h_+^2\bigr)\\
&=E\bigl(h_+^2\,\phi(p)\,h_+^2\bigr)\\
&\geq p\phi(p)p.
\end{aligned}
$$

Let $J=\operatorname{Ideal}_C(y)$.
By Lemma~\ref{lem:vacuum-ideal-preserving},
$E(yy^*)\in J$, and hence $p\phi(p)p\in J$. That is,
the images of $p$ and $\phi(p)$ in $C/J$ are orthogonal.
Since
$g_\lambda\leq2p$ and  $\phi(g_\lambda)\leq2\phi(p)$,
the images of $g_\lambda$ and $\phi(g_\lambda)$ in $C/J$ are also orthogonal.
Thus,
$$
k_\lambda=g_\lambda\phi(g_\lambda)g_\lambda\in J.
$$
Our choice of $\lambda$ now gives
$(a-\varepsilon/2)_+\in\operatorname{Ideal}_C(y)$.

Since $y^2=0$ and $y\in\overline{bCb}$, we have
$$
2[y^*y]=[y^*y+yy^*]\leq[b]\leq[a]
$$
in $\Cu(C)$.
On the other hand, from $(a-\varepsilon/2)_+\in\operatorname{Ideal}_C(y)$ 
we deduce that
$$
[(a-\varepsilon)_+]
\ll [(a-\varepsilon/2)_+]
\leq \infty [y^*y].
$$
Thus, for some $N\in\mathbb N$,
$[(a-\varepsilon)_+]\leq N[y^*y]$.
Since $\varepsilon>0$ was arbitrary, we conclude that $[a]$ is
$(2,\omega)$-divisible in $\Cu(C)$.

By weak selflessness, there is a *-homomorphism
$\sigma\colon C\to A^{\mathcal U}$ fixing $A$.
Consequently, $[a]$ is
$(2,\omega)$-divisible in $\Cu(A^{\cU})$.
As in the proof of Theorem~\ref{thm:glimm-halving-full},
this in turn implies that $[a]$ is $(2,\omega)$-divisible
in $\Cu(A)$, as required.
\end{proof}

\begin{proof}[Proof of Theorem \ref{thm:main2}]
Suppose that $(A,\phi)$ is weakly selfless and that $\phi$
preserves approximate units of ideals of $A$. Then $(A,\phi)$
has strict $\phi$-comparison, by Theorem \ref{thm:local-covariance-comparison}. By Theorem \ref{thm:almost-unperforated}(i),   $\Cu(A)$  is almost unperforated, equivalently, $A$ has strict comparison by $2$-quasitraces. By Theorem~\ref{thm:quasitraces-are-traces}(ii),
the l.s.c. $[0,\infty]$-valued $2$-quasitraces on $A$ are traces.
Hence, $A$ has strict comparison by traces by Proposition \ref{prop:comp-by-traces}. By Theorem \ref{thm:glimm-halving}, $A$ has the global Glimm halving property. Thiel's theorem \cite[Theorem 4.4]{thiel-ranks} now implies that $\Cu(A)$ is almost divisible. Therefore, $A$ is pure. 
\end{proof}

\begin{lemma}\label{lem:expectation-approximate-units}
A conditional expectation $E\colon A\to B$ preserves approximate units 
of ideals of $A$ if and only if $I=\overline{(I\cap B)A}$
 for every closed two-sided ideal $I\subseteq A$.
\end{lemma}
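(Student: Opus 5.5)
We prove both implications directly, using that $E$ is a contractive $B$-bimodule map fixing $B$. Throughout, "approximate unit" means an increasing approximate unit of positive contractions, though any approximate unit works for the argument.

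\emph{($\Rightarrow$)} Suppose $E$ preserves approximate units of ideals, and let $I\subseteq A$ be a closed two-sided ideal.

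First, $E(I)\subseteq I\cap B$. Let $(e_\lambda)$ be an approximate unit of $I$. For $x\in I$, write $x=\lim_\lambda e_\lambda x$. By hypothesis $(E(e_\lambda))$ is an approximate unit of $I$, so in particular $E(e_\lambda)\in I$. Since $E(e_\lambda)\in B$ as well, we get $E(e_\lambda)\in I\cap B$.

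Now fix $x\in I$. Because $(E(e_\lambda))_\lambda$ is an approximate unit of $I$, we have $x=\lim_\lambda E(e_\lambda)x$. Each term $E(e_\lambda)x$ lies in $(I\cap B)A$. Hence $x\in\overline{(I\cap B)A}$.

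The reverse inclusion $\overline{(I\cap B)A}\subseteq I$ is clear, since $I$ is an ideal. So $I=\overline{(I\cap B)A}$.

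\emph{($\Leftarrow$)} Suppose $I=\overline{(I\cap B)A}$ for every closed ideal $I$, and let $(e_\lambda)$ be an approximate unit of $I$. We must show that $(E(e_\lambda))$ is an approximate unit of $I$. This has two parts.

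\emph{Membership.} We need $E(e_\lambda)\in I$, i.e. $E(I)\subseteq I$. Take $x=\sum_j b_ja_j$ with $b_j\in I\cap B$ and $a_j\in A$. The bimodule property gives $E(x)=\sum_j b_jE(a_j)\in I$, since each $b_j\in I$. Such finite sums are dense in $I$ and $E$ is continuous, so $E(I)\subseteq I$.

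\emph{Approximation.} It suffices to show $\|E(e_\lambda)x-x\|\to0$ for $x$ in the dense set of finite sums $\sum b_ja_j$. This reduces further to $\|E(e_\lambda)b-b\|\to0$ for each $b\in I\cap B$. For such $b$ the bimodule property gives
\[
E(e_\lambda)b-b=E(e_\lambda b)-E(b)=E(e_\lambda b-b),
\]
so
\[
\|E(e_\lambda)b-b\|\le\|e_\lambda b-b\|\to0,
\]
because $b\in I$ and $(e_\lambda)$ is an approximate unit of $I$.

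Right multiplication is handled by taking adjoints. Since $I$ is self-adjoint, $I=\overline{A(I\cap B)}$ as well, and since $E(e_\lambda)$ is self-adjoint,
\[
\|xE(e_\lambda)-x\|=\|E(e_\lambda)x^*-x^*\|\to0.
\]
Finally, $E(e_\lambda)$ is a positive contraction, so the approximation extends from the dense subset to all of $I$ by a standard $\varepsilon/3$ argument.

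The only delicate point is noticing that in ($\Leftarrow$) the bimodule property lets us move $b\in I\cap B$ inside $E$. This gives the norm estimate directly, so no separate argument is needed.
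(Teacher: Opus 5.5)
Your proof is correct and follows essentially the same route as the paper: $x=\lim_\lambda E(e_\lambda)x$ with $E(e_\lambda)\in I\cap B$ for ($\Rightarrow$), and $E(e_\lambda)b-b=E(e_\lambda b-b)$ for $b\in I\cap B$ plus density of $(I\cap B)A$ for ($\Leftarrow$). You also spell out points the paper leaves implicit, namely $E(I)\subseteq I$ and the right-sided approximation via adjoints. One small slip: in ($\Rightarrow$) you announce $E(I)\subseteq I\cap B$, but you only prove $E(e_\lambda)\in I\cap B$, which is all the argument needs.
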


\emph{Note}: In the terminology used in \cite{KwasniewskiMeyer}, the condition in the lemma says that the inclusion $B\subseteq A$ is symmetric and that $B$ separates ideals in $A$.

\begin{proof}
Let $I$ be an ideal of $A$ and let $(e_\lambda)_\lambda$ be an approximate unit of $I$. Set $b_\lambda=E(e_\lambda)$. 

Suppose that $(b_\lambda)_\lambda$ is an approximate unit of $I$. Then 
for every $a\in I$ we have 
$$
a=\lim_\lambda b_\lambda a\in \overline{(I\cap B)A}.
$$
This proves one implication.

Suppose that $I=\overline{(I\cap B)A}$. If $b\in I\cap B$, then 
$$
b=E(b)=\lim_\lambda E(e_\lambda b)=\lim_\lambda b_\lambda b.
$$
Thus, $(b_\lambda)_\lambda$ is an approximate unit of $I\cap B$. Since
$I=\overline{(I\cap B)A}$, this extends to $I$.
\end{proof}

\begin{proof}[Proof of Corollary \ref{cor:tensor}]
In both cases, the assumptions guarantee that every ideal of  $A\otimes B$
has the form $A\otimes J$, where $J\subseteq B$ is an ideal of $B$; 
see \cite{blanchard-kirchberg}. Hence, ideals of $A\otimes B$ satisfy
that $A\otimes J=\overline{(1\otimes J)(A\otimes B)}$. 
By Lemma~\ref{lem:expectation-approximate-units}, any conditional expectation $A\otimes B\to  1\otimes B$ preserves approximate units of ideals. Applying this to the expectation
$a\otimes b\mapsto 1_A\otimes\rho(a)b$,
which is selfless by
\cite[Theorem~5.4(iv) and Remark~5.5]{GJKEPR},
we obtain the corollary.
\end{proof}

\end{document}